\crefname{hypothesis}{Hypothesis}{Hypotheses}
\title{A Randomized Coordinate Descent Method with Volume Sampling\thanks{Submitted to the editors DATE.
\funding{This research is in part based on the work
supported by Samsung Research, Samsung Electronics. Results in
Sections~\ref{section:convergence_results:convex_functions},~\ref{section:convergence_results:strongly_convex_functions} have been obtained by Dmitry Kropotov and supported by the Russian Science Foundation grant no.∼19-71-30020.}}}
\author{Anton Rodomanov\thanks{Samsung-HSE
Laboratory, National Research University Higher School of
Economics, Moscow, Russia 
  (\email{anton.rodomanov@gmail.com}, 
  \email{dmitry.kropotov@gmail.com}).}
\and Dmitry Kropotov\footnotemark[2]
\thanks{Lomonosov Moscow State University, Moscow, Russia.}}
\newcommand{\R}{\mathbb{R}} % real numbers
\renewcommand{\Pr}{\mathbb{P}} % probability
\newcommand{\E}{\mathbb{E}} % expectation
\DeclareMathOperator*{\Argmin}{Argmin} % The set of minimizers
\DeclareMathOperator*{\Det}{Det} % determinant
\DeclareMathOperator*{\Rank}{Rank} % rank
\DeclareMathOperator*{\Ker}{Ker} % kernel
\DeclareMathOperator*{\Img}{Im} % image
\DeclareMathOperator{\Tr}{Tr} % trace
\DeclareMathOperator*{\Diag}{Diag} % diagonal matrix
\DeclareMathOperator*{\Adj}{Adj} % adjugate matrix
\DeclareMathOperator*{\Vol}{Vol} % volume
\DeclareMathOperator*{\RCDVS}{RCDVS} % RCDVS (short name for the proposed method)
\DeclareMathOperator*{\sparsetwovspreprocess}{sparse2vs\_preprocess}
\DeclareMathOperator*{\sparsetwovssample}{sparse2vs\_sample}
\DeclareMathOperator{\nnz}{nnz} % nnz (number of non-zeros)
\begin{document}

\maketitle

\begin{abstract}
We analyze the coordinate descent method with a new coordinate selection strategy, called volume sampling. This strategy prescribes selecting subsets of variables of certain size proportionally to the determinants of principal submatrices of the matrix, that bounds the curvature of the objective function. In the particular case, when the size of the subsets equals one, volume sampling coincides with the well-known strategy of sampling coordinates proportionally to their Lipschitz constants. For the coordinate descent with volume sampling, we establish the convergence rates both for convex and strongly convex problems. Our theoretical results show that, by increasing the size of the subsets, it is possible to accelerate the method up to the factor which depends on the spectral gap between the corresponding largest eigenvalues of the curvature matrix. Several numerical experiments confirm our theoretical conclusions.
\end{abstract}

\begin{keywords}
Convex optimization, Unconstrained minimization, Coordinate descent methods, Randomized algorithms, Volume sampling, Convergence rate
\end{keywords}

\begin{AMS}
90C25, 90C06, 68Q25
\end{AMS}

\section{Introduction}

Coordinate descent methods are minimization algorithms that are very popular for solving large-scale optimization problems. The main idea of these algorithms is to successively reduce the value of the objective function along certain subsets of coordinates that are selected at each iteration according to some rule. Coordinate descent has been successfully applied to a number of applications in various areas such as machine learning, compressed sensing, network problems etc.

One of the main parameters of a typical coordinate descent method is the \emph{number of coordinates} $\tau$, which are updated at every iteration. When choosing this parameter, one usually faces the following trade-off. On the one hand, the convergence rate of the method becomes faster with the increase of $\tau$, but, on the other hand, each iteration becomes more expensive. Therefore, to obtain a speed-up of the method in terms of the total running time, it is necessary to ensure that the increase in the cost of each iteration is low, compared to the increase in the convergence rate. One possible way to achieve this is \emph{parallelization}. This idea has been extensively discussed in the context of parallel coordinate descent~\cite{richtarik2016parallel,necoara2017random,fercoq2016optimization,necoara2016parallel,bradley2011parallel,peng2013parallel}, where (under certain separability assumptions) the authors show that the convergence rate improves \emph{linearly} in $\tau$, and thus it is possible to achieve a linear speed-up by using $\tau$ independent processors instead of one. Note, however, that in the usual \emph{serial regime} (without parallelization) the aforementioned results do not guarantee any decrease in the total running time, since each iteration becomes at least $\tau$ times more expensive. Clearly, to be able to ensure a speed-up in this regime, one needs some \emph{non-linear}, in terms of $\tau$, convergence rate estimates.

In this paper, we analyze the coordinate descent method with a new coordinate selection strategy, called \emph{volume sampling}. This strategy prescribes selecting $\tau$-element subsets of variables proportionally to the volumes (or determinants) of principal submatrices of the matrix $B$, that bounds the curvature of the objective function (see Section~\ref{section:description_of_method}). For the coordinate descent with volume sampling, we establish the worst-case iteration complexity bounds, that have a \emph{non-linear} dependency on $\tau$. In particular, we show that the increase in $\tau$ from $\tau_1$ to $\tau_2$ leads to the improvement of the convergence rate of the method up to the factor of
\begin{equation}\label{eq:theoretical_acceleration_ratio}
R_{\lambda}(\tau_1, \tau_2) := \frac{\sum_{i=\tau_1}^n \lambda_i}{\sum_{i=\tau_2}^n \lambda_i},
\end{equation}
where $\lambda_1 \geq \dots \geq \lambda_n$ are the eigenvalues of $B$. Note that $R_{\lambda}(\tau_1, \tau_2)$ can be arbitrarily big, depending on the \emph{spectral gap} between $\lambda_{\tau_1}$ and $\lambda_{\tau_2}$.

In addition to this, we also propose a new efficient algorithm for 2-element volume sampling from a \emph{sparse} matrix. The preprocessing complexity of this algorithm is of the order of the number of non-zero elements in the matrix, and its sampling complexity is only logarithmic in the dimension.

\subsection{Related work}\label{section:related_work}

There is a vast literature on coordinate descent methods. Most research in this area is usually focused on the rules for selecting coordinates \cite{nesterov2012efficiency,beck2013convergence,nutini2015coordinate}, or on obtaining accelerated \cite{nesterov2012efficiency,nesterov2017efficiency,allen2016even}, parallel \cite{richtarik2016parallel,necoara2017random,fercoq2016optimization,necoara2016parallel,bradley2011parallel,peng2013parallel}, proximal \cite{fercoq2016optimization,richtarik2014iteration} and primal-dual \cite{shalev2013stochastic,lin2015accelerated,shalev2016sdca,qu2016sdna} variants of the already known methods. For a general overview of the topic, see the recent paper~\cite{wright2015coordinate} and references therein. Below we discuss just several works that are most closely related to ours.

One of the most influential papers on coordinate descent is \cite{nesterov2012efficiency} by Nesterov, where he proposed a coordinate gradient method (which we will refer to as RCD) with a special \emph{randomized} rule for selecting coordinates. In RCD, each coordinate is sampled with probability proportional to the corresponding coordinate Lipschitz constant. Nesterov then derived the complexity bound for RCD and showed that it can be even better than that of the standard gradient method. The method, that we consider in this work, generalizes RCD in the sense that it coincides with it in the special case when the number of coordinates, selected at each iteration, equals one.

The most relevant work to ours is~\cite{qu2016sdna}, where the authors propose three different randomized methods for smooth unconstrained minimization. Their Method~1 is exactly the same method, that we analyze in this paper, with the only difference that, instead of volume sampling, they consider an \emph{arbitrary} sampling. As a result, the convergence rate of their method is expressed quite abstractly (in terms of the minimal eigenvalue of the expectation of a certain matrix), and, in particular, it is not clear how exactly it depends on the number of coordinates $\tau$, used at each iteration. Although the authors provide a particular example of a $3 \times 3$ matrix, for which their method should be very efficient, they do not establish any general results. In this regard, our work can be considered a further development of \cite{qu2016sdna}, where we establish more interpretable iteration complexity bounds specifically for volume sampling. In addition to that, in \cite{qu2016sdna}, the authors work only with the strongly convex problems, while. in this paper, we allow the problem to be non-strongly convex.

Another relevant work to this one is \cite{kovalev2018stochastic}, where the authors propose a new randomized optimization method for minimizing quadratic functions, given $\tau$ eigenvectors corresponding to the $\tau$ smallest eigenvalues of the Hessian. Although the complexity estimates for this method look similar to ours, there are several key differences. First, the results in \cite{kovalev2018stochastic} show that the increase in the number of coordinates from $\tau_1$ to $\tau_2$ in their method leads to the acceleration rate that depends on the spectral gap between the $\tau_1$st and $\tau_2$nd \emph{smallest} eigenvalues. The acceleration rate for coordinate descent with volume sampling, depends, on the contrary, on the spectral gap between the $\tau_1$st and $\tau_2$nd \emph{largest} eigenvalues. Second, their method is not, strictly speaking, a coordinate descent algorithm since it uses eigenvectors as search directions instead of the coordinate directions. Finally, it is also less practical. For example, even in the simplest non-trivial regime $\tau=1$, their method requires an eigenvector, corresponding to the smallest eigenvalue; the complexity of obtaining such a vector is in general $O(n^3)$. In contrast, the simplest non-trivial choice for coordinate descent with volume sampling is $\tau=2$, which requires to perform at the beginning one preprocessing step of $O(n^2)$ (or even $O(n+\nnz(B))$, see Section~\ref{section:sparse_two_element_volume_sampling}), and then each iteration of the method takes linear time in the dimension.

Finally, we should mention that, although volume sampling has not been previously considered in the context of coordinate descent methods, it is not a novel concept, and has already been known in the literature for some time. To our knowledge, it was first proposed in \cite{deshpande2006matrix} for the problem of matrix approximation. Later on the same authors developed several efficient exact and approximate methods for doing volume sampling \cite{deshpande2010efficient} based on the standard linear algebra algorithms. Some other polynomial-time sampling methods and their connection to the theory of Markov chains were considered in \cite{li2017polynomial}. Recently volume sampling has also been applied to the problem of linear regression \cite{derezinski2018reverse,derezinski2018leveraged}.

\subsection{Contents}

This paper is organized as follows. In Section~\ref{section:description_of_method}, we describe the randomized coordinate descent method with volume sampling. In Section~\ref{section:convergence_results}, we present the convergence analysis of this method. We start with an auxiliary sufficient decrease lemma (Section~\ref{section:sufficient_decrease_lemma}) and then use it to derive the convergence rates both for convex functions (Section~\ref{section:convergence_results:convex_functions}) and strongly convex ones (Section~\ref{section:convergence_results:strongly_convex_functions}).  In Section~\ref{section:implementation_of_volume_sampling}, we discuss how to generate a random variable according to volume sampling. First, we discuss a simple general approach (Section~\ref{section:general_algorithm}) and, after this, develop a special algorithm for 2-element volume sampling which is suitable for sparse matrices (Section~\ref{section:sparse_two_element_volume_sampling}). In Section~\ref{section:examples_of_applications}, we consider several examples of possible applications: quadratic functions (Section~\ref{section:quadratic_function}), separable problems (Section~\ref{section:separable_problems}) and the smoothing technique (Section~\ref{section:smoothing_technique}). Finally, in Section~\ref{section:numerical_experiments}, we present the results of several numerical experiments.

\subsection{Notation}

By $\R^n$ we denote the Euclidean space of all $n$-dimensional real column vectors with the standard inner product $\langle u, v \rangle := \sum_{i=1}^n u_i v_i$ and the standard Euclidean norm $\|v\| := \langle v, v \rangle^{\frac12}$. Given an $n \times n$ real symmetric positive semidefinite matrix $B$, we also use the seminorm $\|v\|_B := \langle B v, v \rangle^{\frac12}$; recall that $\| \cdot \|_B$ becomes a norm iff $B$ is positive definite.

For $1 \leq \tau \leq n$, by $[n] \choose \tau$ we denote the collection of all $\tau$-element subsets of $[n] := \{1,\dots,n\}$. For each $S \in {[n] \choose \tau}$, by $I_S$ we denote the $n \times \tau$ matrix obtained from the $n \times n$ identity matrix $I$ by retaining only those columns whose indices are in $S$; if the dimension $n$ is not specified directly, then it can be determined from the context. For an $n \times n$ matrix $B$ and a subset $S \in {[n] \choose \tau}$, by $B_{S \times S}$ we denote the $\tau \times \tau$ principal submatrix located at the intersection of the rows and columns with indices from $S$ (i.e. $B_{S \times S} := I_S^T B I_S$); similarly, for a vector $v \in \R^n$, by $v_S$ we denote the subvector of size $\tau$ obtained from $v$ by retaining only the elements with indices from $S$ (i.e. $v_S := I_S^T v$).

Finally, for a square matrix $A$, by $\Adj(A)$ we denote its adjugate matrix (the transpose of the cofactor matrix).

\section{Randomized coordinate descent with volume sampling}\label{section:description_of_method}

Consider the unconstrained optimization problem
$$
\min_{x \in \R^n} f(x),
$$
where $f : \R^n \to \R$ is a differentiable function. We assume that $f$ is \emph{1-smooth with respect to the seminorm $\| \cdot \|_B$} induced by some $n \times n$ real symmetric positive semidefinite matrix $B$:
\begin{equation}\label{eq:smoothness_wrt_B}
f(y) \leq f(x) + \langle \nabla f(x), y - x \rangle + \frac{1}{2} \| y - x \|_B^2
\end{equation}
for all $x, y \in \R^n$ (see Section~\ref{section:examples_of_applications} for examples). When $f$ is twice continuously differentiable, one sufficient condition for this is that the Hessian of $f$ is uniformly upper bounded by $B$.

\begin{remark}
The standard smoothness assumption in the context of coordinate descent methods is slightly different. Typically, it is assumed that, for each $S \in {[n] \choose \tau}$, there exists $L_S \geq 0$ (called coordinate Lipschitz constant) such that
\begin{equation}\label{eq-std-lip}
f(x + I_S h) \leq f(x) + \langle \nabla f(x)_S, h \rangle + \frac{L_S}{2} \| h \|^2
\end{equation}
for all $x \in \R^n$ and all $h \in \R^\tau$. Clearly, if $f$ satisfies \eqref{eq:smoothness_wrt_B}, it also satisfies \eqref{eq-std-lip} for $L_S := \| B_{S \times S} \|$. However, \eqref{eq:smoothness_wrt_B} and \eqref{eq-std-lip} are not completely equivalent. For example, if $\tau=1$ and the function $f$ is twice continuously differentiable, \eqref{eq-std-lip} requires only the \emph{diagonal} of the Hessian to be uniformly bounded. Nevertheless, for many practical applications, condition \eqref{eq:smoothness_wrt_B} holds (see Section~\ref{section:examples_of_applications}).
\end{remark}

Let us fix a point $x_0 \in \R^n$ and a $\tau$-element subset of coordinates $S_0 \in {[n] \choose \tau}$, where $1 \leq \tau \leq \Rank(B)$. According to \eqref{eq:smoothness_wrt_B}, we have
\begin{equation}\label{eq:smoothness_wrt_B_coordinate_version}
f(x_0 + I_{S_0} h) \leq f(x_0) + \langle \nabla f(x_0)_{S_0}, h \rangle + \frac{1}{2} \| h \|^2_{B_{S_0 \times S_0}}
\end{equation}
for all $h \in \R^\tau$. A natural idea to obtain an update rule of a coordinate descent algorithm is to minimize the right-hand side of \eqref{eq:smoothness_wrt_B_coordinate_version} in $h$. It is possible to do so when the matrix $B_{S_0 \times S_0}$ is non-degenerate, and this leads to the following update rule:
\begin{equation}\label{eq:basic_iteration}
x_1 := x_0 - I_{S_0} (B_{S_0 \times S_0})^{-1} \nabla f(x_0)_{S_0}.
\end{equation}

Now it remains to specify the procedure for selecting the coordinates $S_0$. In view of the above remark, the probability of choosing a degenerate submatrix $B_{S_0 \times S_0}$ should be zero. One sampling scheme that naturally possesses this property is given by the following

\begin{definition}[Volume sampling]
Let $B$ be an $n \times n$ real symmetric positive semidefinite matrix, let $1 \leq \tau \leq \Rank(B)$, and let $S_0$ be a random variable taking values in $[n] \choose \tau$. We say that $S_0$ is generated according to \emph{$\tau$-element volume sampling with respect to $B$}, denoted by $S_0 \sim \Vol_{\tau}(B)$, if for all $S \in {[n] \choose \tau}$, we have
$$
\Pr(S_0 = S) = \frac{\Det(B_{S \times S})}{\sum_{S' \in {[n] \choose \tau}} \Det(B_{S' \times S'})}.
$$
\end{definition}
Observe that for $\tau=1$ volume sampling corresponds to picking indices with probabilities proportional to the coordinate Lipschitz constants $B_{ii}$ (diagonal elements of $B$). Thus, volume sampling in fact generalizes the well-known coordinate Lipschitz constant sampling. We discuss its implementation in Section~\ref{section:implementation_of_volume_sampling}.

Combining the update rule \eqref{eq:basic_iteration} with the volume sampling of coordinates, we obtain the Randomized Coordinate Descent Method with Volume Sampling (RCDVS), see Algorithm~\ref{algorithm:rcdvs}. Note that for $\tau=1$ RCDVS coincides with the well-known RCD method from \cite{nesterov2012efficiency}.

\begin{algorithm}
\caption{\label{algorithm:rcdvs}$\RCDVS(f, B, \tau, x_0, K)$}
\begin{algorithmic}
\REQUIRE differentiable function $f : \R^n \to \R$; $n \times n$ real symmetric positive semidefinite matrix $B$; number of coordinates $1 \leq \tau \leq \Rank(B)$; starting point $x_0 \in \R^n$; number of iterations $K \geq 1$.
\FOR{$0 \leq k \leq K-1$}
    \STATE Choose a random subset of coordinates $S_k \sim \Vol_{\tau}(B)$.
    \STATE Set $x_{k+1} := x_k - I_{S_k} (B_{S_k \times S_k})^{-1} \nabla f(x_k)_{S_k}$.
\ENDFOR
\end{algorithmic}
\end{algorithm}

\section{Convergence analysis}\label{section:convergence_results}

We now turn to analyzing the convergence rate of the RCDVS method. To keep the presentation concise, we only study the convergence rates of expectations, although it is not difficult to establish their high probability counterparts using standard techniques.

\subsection{Sufficient decrease lemma}\label{section:sufficient_decrease_lemma}

We start with the following simple result which directly follows from smoothness and does not yet take into account the particular strategy for sampling coordinates:
\begin{lemma}[General sufficient decrease lemma]\label{lemma:sufficient_decrease_lemma_1}
Let $f : \R^n \to \R$ be a 1-smooth function with respect to the seminorm $\| \cdot \|_B$, where $B$ is an $n \times n$ real symmetric positive semidefinite matrix. Let $x_0 \in \R^n$ be deterministic, let $1 \leq \tau \leq \Rank(B)$, let $S_0$ be a random variable taking values in ${[n] \choose \tau}$ such that $B_{S_0 \times S_0}$ is non-degenerate almost surely, and let $x_1 := x_0 - I_{S_0} (B_{S_0 \times S_0})^{-1} \nabla f(x_0)_{S_0}$. Then $f(x_1) \leq f(x_0)$ almost surely, and
\begin{equation}\label{eq:sufficient_decrease_lemma_1}
f(x_0) - \E f(x_1) \geq \frac{1}{2} \| \nabla f(x_0) \|_{\E I_{S_0} (B_{S_0 \times S_0})^{-1} I_{S_0}^T}^2.
\end{equation}
\end{lemma}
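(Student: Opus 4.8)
The plan is to argue pathwise, conditioning on the realization of $S_0$, and only pass to expectations at the very last step. First I would fix an arbitrary $S \in {[n] \choose \tau}$ for which $B_{S \times S}$ is non-degenerate; being a non-degenerate principal submatrix of a positive semidefinite matrix, $B_{S \times S}$ is in fact positive definite, so $(B_{S \times S})^{-1}$ is symmetric positive definite and the seminorm manipulations below are legitimate. Substituting the minimizer $h^* := -(B_{S \times S})^{-1} \nabla f(x_0)_S$ of the right-hand side of \eqref{eq:smoothness_wrt_B_coordinate_version} back into that same inequality and using symmetry of $(B_{S \times S})^{-1}$, I get $\langle \nabla f(x_0)_S, h^* \rangle = -\| \nabla f(x_0)_S \|_{(B_{S \times S})^{-1}}^2$ and $\| h^* \|_{B_{S \times S}}^2 = \| \nabla f(x_0)_S \|_{(B_{S \times S})^{-1}}^2$, so that on the event $\{ S_0 = S \}$,
$$
f(x_1) \leq f(x_0) - \tfrac{1}{2} \| \nabla f(x_0)_S \|_{(B_{S \times S})^{-1}}^2.
$$
Since the quadratic form on the right is nonnegative, this already gives $f(x_1) \leq f(x_0)$ almost surely, the first claim of the lemma.

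Next I would rewrite the decrease term in terms of the full gradient. Using $\nabla f(x_0)_S = I_S^T \nabla f(x_0)$ and the definition of the $\| \cdot \|_M$ seminorm,
$$
\| \nabla f(x_0)_S \|_{(B_{S \times S})^{-1}}^2 = \langle I_S (B_{S \times S})^{-1} I_S^T \nabla f(x_0), \nabla f(x_0) \rangle = \| \nabla f(x_0) \|_{I_S (B_{S \times S})^{-1} I_S^T}^2,
$$
where the matrix $I_S (B_{S \times S})^{-1} I_S^T$ is symmetric positive semidefinite. Combining over all realizations, the inequality
$$
f(x_0) - f(x_1) \geq \tfrac{1}{2} \| \nabla f(x_0) \|_{I_{S_0} (B_{S_0 \times S_0})^{-1} I_{S_0}^T}^2
$$
holds almost surely.

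Finally I would take expectations of both sides. Because ${[n] \choose \tau}$ is finite (so there are no integrability concerns) and $x_0$, hence $\nabla f(x_0)$, is deterministic, linearity of expectation lets me pull $\E$ inside the bilinear form:
$$
\E \langle I_{S_0} (B_{S_0 \times S_0})^{-1} I_{S_0}^T \nabla f(x_0), \nabla f(x_0) \rangle = \langle \bigl( \E I_{S_0} (B_{S_0 \times S_0})^{-1} I_{S_0}^T \bigr) \nabla f(x_0), \nabla f(x_0) \rangle,
$$
which is precisely $\| \nabla f(x_0) \|_{\E I_{S_0} (B_{S_0 \times S_0})^{-1} I_{S_0}^T}^2$. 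This yields \eqref{eq:sufficient_decrease_lemma_1} and completes the proof.

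This lemma is essentially bookkeeping, so I do not expect a genuine obstacle; the only two points that require a moment of care are (i) upgrading ``non-degenerate'' to ``positive definite'' for the principal submatrix, which is what makes the substitution of $h^*$ and the $(B_{S \times S})^{-1}$-seminorm meaningful, and (ii) justifying the interchange of the expectation with the quadratic form, which here is immediate because the volume sampling distribution is supported on the finite set ${[n] \choose \tau}$ and the gradient at the deterministic point $x_0$ is not random.
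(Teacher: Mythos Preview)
Your proof is correct and is exactly the argument the paper has in mind: the paper does not spell out a proof but merely remarks that the lemma ``directly follows from smoothness,'' i.e.\ from substituting the minimizer into \eqref{eq:smoothness_wrt_B_coordinate_version} and then taking expectations---precisely the steps you carry out. One tiny slip: in your final paragraph you refer to ``the volume sampling distribution,'' but this lemma is stated for an \emph{arbitrary} sampling of $S_0$ (volume sampling enters only later, in Lemma~\ref{lemma:sufficient_decrease_lemma_2}); the finiteness argument you give is of course valid for any distribution on ${[n] \choose \tau}$.
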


In its current form, Lemma~\ref{lemma:sufficient_decrease_lemma_1} is not very useful since it involves some general expectation $\E I_{S_0} (B_{S_0 \times S_0})^{-1} I_{S_0}^T$ which is not clear how to work with. Our task now is to estimate this expectation in a convenient yet non-trivial way for the particular case $S_0 \sim \Vol_\tau(B)$.

Assume that all $\tau \times \tau$ submatrices of $B$ are non-degenerate, i.e. the $\tau$-element volume sampling has full support (the other case will be considered later). Using Cramer's rule $\Det(B_{S \times S}) (B_{S \times S})^{-1} = \Adj(B_{S \times S})$, we can write
\begin{equation}\label{eq:expectation_via_sum_simple_case}
\E I_{S_0} (B_{S_0 \times S_0})^{-1} I_{S_0}^T = \frac{ \sum_{S \in {[n] \choose \tau}} I_S \Adj(B_{S \times S}) I_S^T }{ \sum_{S \in {[n] \choose \tau}} \Det(B_{S \times S}) }.
\end{equation}
Thus, to estimate the expectation, we need to estimate the following two sums:
\begin{enumerate}
\item The sum of principal minors $\sum_{S \in {[n] \choose \tau}} \Det(B_{S \times S})$.
\item The sum $\sum_{S \in {[n] \choose \tau}} I_S \Adj(B_{S \times S}) I_S^T$.
\end{enumerate}

The first sum is rather well-known and a closed form expression for it can be found in many standard textbooks on linear algebra (see e.g. Chapter 7 \cite{meyer2000matrix}). To present the formula, let us introduce for each $1 \leq m \leq n$, the real \emph{elementary symmetric polynomial} $\sigma_m : \R^n \to \R$ of degree $m$, defined by
$$
\sigma_m(x) := \sum_{ 1 \leq i_1 < \dots < i_m \leq n } x_{i_1} \dots x_{i_m},
$$
i.e. the sum of all $m$-ary products of $x_1, \dots, x_n$, and put $\sigma_0(x) := 1$ for convenience. The well-known result is
\begin{lemma}[Sum of principal minors]\label{lemma:sum_of_principal_minors}
Let $B$ be an $n \times n$ real symmetric matrix with eigenvalues $\lambda := (\lambda_1, \dots, \lambda_n)$, where $\lambda_1 \geq \dots \geq \lambda_n$, and let $1 \leq \tau \leq n$. Then
\begin{equation}\label{lemma:sum_of_principal_minors:main_eq}
\sum_{S \in {[n] \choose \tau}} \Det(B_{S \times S}) = \sigma_{\tau}(\lambda).
\end{equation}
\end{lemma}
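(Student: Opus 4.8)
The plan is to prove the identity $\sum_{S \in \binom{[n]}{\tau}} \Det(B_{S \times S}) = \sigma_\tau(\lambda)$ by comparing two expressions for the coefficients of the characteristic polynomial of $B$. First I would recall the standard fact that, for any $n \times n$ matrix $B$,
\begin{equation*}
\det(t I - B) = \sum_{\tau = 0}^{n} (-1)^\tau \Bigl( \sum_{S \in \binom{[n]}{\tau}} \Det(B_{S \times S}) \Bigr) t^{n - \tau},
\end{equation*}
which is obtained by expanding the determinant multilinearly in the columns of $tI - B$: each column contributes either the term from $tI$ or the term from $-B$, and collecting all choices that pick $-B$ in exactly the columns indexed by a set $S$ (and $tI$ elsewhere) produces, after a cofactor/permutation bookkeeping argument, precisely $(-1)^{|S|} t^{n - |S|} \Det(B_{S \times S})$. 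Thus the coefficient of $t^{n-\tau}$ in the characteristic polynomial equals $(-1)^\tau \sum_{S \in \binom{[n]}{\tau}} \Det(B_{S \times S})$.

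Second, since $B$ is real symmetric it is orthogonally diagonalizable with eigenvalues $\lambda_1, \dots, \lambda_n$, so its characteristic polynomial also factors as
\begin{equation*}
\det(t I - B) = \prod_{i=1}^{n} (t - \lambda_i) = \sum_{\tau=0}^{n} (-1)^\tau \sigma_\tau(\lambda)\, t^{n-\tau},
\end{equation*}
the last equality being the defining expansion of the elementary symmetric polynomials (Vieta's formulas). Matching the coefficient of $t^{n-\tau}$ in the two expansions and cancelling the common sign $(-1)^\tau$ yields \eqref{lemma:sum_of_principal_minors:main_eq} for every $1 \leq \tau \leq n$.

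There is no real obstacle here; the statement is classical. The only step requiring a little care is the multilinear expansion in the first paragraph — one must check that the sign that emerges from reordering rows and columns to isolate the principal submatrix $B_{S \times S}$ against the identity block is exactly $(-1)^{|S|}$, with no extra permutation sign, because the complementary identity block contributes a trivial $t^{n-|S|}$. Alternatively, one could cite a standard linear algebra reference (e.g. Chapter~7 of \cite{meyer2000matrix}) for the identification of the characteristic-polynomial coefficients with sums of principal minors and reduce the proof to the two-line coefficient comparison above.
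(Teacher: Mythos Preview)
Your proposal is correct and is the standard characteristic-polynomial argument for this classical identity. The paper does not actually supply a proof of this lemma: it simply states the result and refers to Chapter~7 of \cite{meyer2000matrix}, so your write-up already exceeds what the paper provides, and your suggested alternative of citing that reference is precisely what the authors do.
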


Now we turn to the second sum. To the best of our knowledge, this sum has not been previously considered in the literature. Nevertheless, it turns out that it can also be conveniently expressed in terms of the elementary symmetric polynomials of eigenvalues:
\begin{lemma}\label{lemma:sum_of_adjugates_of_principal_submatrices}
Let $B$ be an $n \times n$ real symmetric matrix with eigenvalues $\lambda := (\lambda_1, \dots, \lambda_n)$, where $\lambda_1 \geq \dots \geq \lambda_n$, let $B = Q \Diag(\lambda) Q^T$ be its spectral decomposition for some $n \times n$ orthogonal matrix $Q$, and let $1 \leq \tau \leq n$. Then
\begin{equation}\label{eq:sum_of_adjugates_of_principal_submatrices}
\sum_{S \in {[n] \choose \tau}} I_S \Adj(B_{S \times S}) I_S^T = Q \Diag(\sigma_{\tau-1}(\lambda_{-1}), \dots, \sigma_{\tau-1}(\lambda_{-n})) Q^T,
\end{equation}
where $\lambda_{-i}$ for each $1 \leq i \leq n$ denotes the vector $\lambda$ without the $i$-th element.
\end{lemma}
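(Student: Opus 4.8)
The plan is to reduce everything to a computation in the eigenbasis of $B$ and then identify the entries of the resulting matrix with elementary symmetric polynomials. Write $B = Q\Diag(\lambda)Q^T$ as in the statement. The key structural difficulty is that the map $S \mapsto B_{S\times S} = I_S^T B I_S$ does not diagonalize simultaneously with $B$: the submatrix indexing is tied to the standard coordinate basis, not to the eigenbasis. So I cannot just "change basis and compute." Instead, I would first establish a basis-independent identity for $\sum_S I_S \Adj(B_{S\times S}) I_S^T$ and only afterwards specialize to $B$ diagonal.

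First I would look for a clean description of $I_S \Adj(B_{S\times S}) I_S^T$. Recall that for a square matrix $A$, $\Adj(A)_{ij} = (-1)^{i+j}\Det(A_{\hat j \hat i})$ where $A_{\hat j \hat i}$ deletes row $j$ and column $i$. Expanding, the $(p,q)$ entry of $\sum_{S} I_S \Adj(B_{S\times S}) I_S^T$ (for $p,q \in [n]$) is a signed sum, over all $\tau$-subsets $S$ containing both $p$ and $q$, of the $(\tau-1)\times(\tau-1)$ minor of $B$ obtained by keeping rows $S\setminus\{q\}$ and columns $S\setminus\{p\}$ (with appropriate sign). The natural guess, which I would try to prove, is the compact formula
\begin{equation*}
\sum_{S \in {[n] \choose \tau}} I_S \Adj(B_{S \times S}) I_S^T = \sum_{\substack{T \in {[n] \choose \tau - 1}}} \Det(B_{T\times T})\, P_T,
\end{equation*}
or something closely related, where the correction terms assemble into $\Adj$ of larger submatrices; more promisingly, I would try to show directly that this sum equals $\sigma_{\tau-1}$ applied to the "principal submatrix of $B$ with one eigen-direction removed." Concretely, I expect the cleanest route is: prove the operator identity
\begin{equation*}
\sum_{S \in {[n] \choose \tau}} I_S \Adj(B_{S \times S}) I_S^T \;=\; \text{(the matrix whose $v$-action is)}\; v \mapsto \text{coefficient extraction from } \Det(tI + B)\text{-type generating function},
\end{equation*}
and then evaluate. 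A concrete and rigorous way to do this is via a \emph{generating-function / perturbation} argument: consider $g(t) := \sum_{S\in{[n]\choose\tau}} \Det\big((B + t\, e_p e_p^T)_{S\times S}\big)$ — wait, better: fix a vector and differentiate. Actually the slickest approach is the following identity, which I would prove by cofactor expansion: for any symmetric $B$,
\begin{equation*}
\sum_{S\in{[n]\choose \tau}} I_S\,\Adj(B_{S\times S})\,I_S^T \;=\; \frac{\partial}{\partial s}\Big|_{s=0}\ \sigma_\tau\text{-type sum of } (B + s\,M),
\end{equation*}
i.e. relate it to the derivative of $\sum_{S}\Det(B_{S\times S})$ under rank-one perturbations $M = \xi\xi^T$. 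Since $\frac{d}{ds}\Det(A + s\,\xi_S\xi_S^T)\big|_{s=0} = \xi_S^T \Adj(A)\,\xi_S$ for each $\tau\times\tau$ block, summing gives $\xi^T\big(\sum_S I_S\Adj(B_{S\times S})I_S^T\big)\xi = \frac{d}{ds}\big|_{s=0}\,\sigma_\tau\big(\text{eigenvalues of } B + s\,\xi\xi^T\big)$, where the last equality uses Lemma~\ref{lemma:sum_of_principal_minors}.

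Now the problem is purely spectral. I would pick $\xi = Q e_i$, an eigenvector of $B$, so that $B + s\,\xi\xi^T$ has eigenvalues $\lambda_1,\dots,\lambda_{i-1},\lambda_i + s,\lambda_{i+1},\dots,\lambda_n$ (the eigenvectors are unchanged since $\xi$ is already an eigenvector). Therefore the right-hand side becomes $\frac{d}{ds}\big|_{s=0}\,\sigma_\tau(\lambda_1,\dots,\lambda_i+s,\dots,\lambda_n)$, which by the definition of the elementary symmetric polynomial — linear in each variable — is exactly the partial derivative $\partial\sigma_\tau/\partial\lambda_i(\lambda) = \sigma_{\tau-1}(\lambda_{-i})$. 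This shows that the quadratic form $\xi \mapsto \xi^T\big(\sum_S I_S\Adj(B_{S\times S})I_S^T\big)\xi$ takes the value $\sigma_{\tau-1}(\lambda_{-i})$ on each eigenvector $Qe_i$; to conclude I also need the off-diagonal entries in the eigenbasis to vanish, i.e. $(Qe_i)^T\big(\sum_S\cdots\big)(Qe_j) = 0$ for $i\neq j$. This I would get either by polarization applied to perturbations $B + s\,(Qe_i)(Qe_j)^T + \dots$ (keeping the matrix symmetric), or — cleaner — by the same derivative identity with $\xi = Q(e_i + e_j)$ and $\xi = Q(e_i - e_j)$ and subtracting, using that $\sigma_\tau$ of the perturbed spectrum is symmetric under $s \to -s$ in that combination, forcing the cross term to vanish. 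Hence $\sum_S I_S\Adj(B_{S\times S})I_S^T = Q\Diag(\sigma_{\tau-1}(\lambda_{-1}),\dots,\sigma_{\tau-1}(\lambda_{-n}))Q^T$, as claimed.

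The main obstacle I anticipate is making the rank-one-perturbation derivative identity fully rigorous on the level of \emph{sums of submatrix determinants} rather than a single determinant, and handling the off-diagonal vanishing cleanly without a messy polarization. An alternative, purely combinatorial route — directly matching the signed minor sums in $\Adj(B_{S\times S})$ against a Cauchy–Binet / compound-matrix expansion of $Q\Diag(\sigma_{\tau-1}(\lambda_{-i}))Q^T$ — would avoid calculus but requires careful sign bookkeeping in the Laplace expansion of minors of $B = Q\Diag(\lambda)Q^T$ via Cauchy–Binet over $(\tau-1)$-subsets; I expect that to be more error-prone than the perturbation argument, so I would present the perturbation proof and only fall back to Cauchy–Binet if a subtlety appears (e.g. if $B$ is not diagonalizable by a \emph{real} orthogonal $Q$, which here it always is since $B$ is symmetric, so this concern does not arise).
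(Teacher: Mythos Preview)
Your perturbation approach is correct and genuinely different from the paper's. The paper introduces the matrix-valued polynomial $P(t) := \Adj(B - tI)$ and computes its $(n-\tau)$-th derivative at zero in two ways: once via the spectral decomposition (yielding the right-hand side of \eqref{eq:sum_of_adjugates_of_principal_submatrices}), and once by proving an inductive differentiation formula $P'(t) = -\sum_i I_{-i}\Adj(B_{-i\times -i} - tI)I_{-i}^T$ and iterating (yielding the left-hand side). Your route instead identifies the quadratic form of the left-hand side with $\frac{d}{ds}\big|_{s=0}\sum_S\Det((B+s\xi\xi^T)_{S\times S})$ via Jacobi's formula, then invokes Lemma~\ref{lemma:sum_of_principal_minors} to turn this into $\frac{d}{ds}\big|_{s=0}\sigma_\tau(\text{spectrum of }B+s\xi\xi^T)$, which for $\xi=Qe_i$ is the partial derivative $\partial\sigma_\tau/\partial\lambda_i=\sigma_{\tau-1}(\lambda_{-i})$. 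Your argument is shorter and more conceptual (it reuses Lemma~\ref{lemma:sum_of_principal_minors} rather than proving a separate derivative lemma for the adjugate), while the paper's argument is more self-contained and does not rely on polarization.

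One correction to your off-diagonal step: the stated reason ``symmetric under $s\to -s$'' is not what makes the cross term vanish. The clean justification is that, with $D$ the diagonal sign matrix flipping the $j$-th coordinate, $QDQ^T$ commutes with $B$ and sends $Q(e_i+e_j)$ to $Q(e_i-e_j)$, so $B+s\,Q(e_i+e_j)(e_i+e_j)^TQ^T$ and $B+s\,Q(e_i-e_j)(e_i-e_j)^TQ^T$ are similar and hence isospectral for every $s$; thus $F(Q(e_i+e_j))=F(Q(e_i-e_j))$ and polarization gives $(Qe_i)^T M (Qe_j)=0$. With that fix, your proof is complete.
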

Let us accept this lemma for now and defer its proof to a separate Section~\ref{section:proof_of_key_estimate}.

Using Lemma~\ref{lemma:sum_of_principal_minors} together with Lemma~\ref{lemma:sum_of_adjugates_of_principal_submatrices}, we can rewrite \eqref{eq:expectation_via_sum_simple_case} as follows:
\begin{equation}\label{eq:expectation_via_sum_simple_case_2}
\E I_{S_0} (B_{S_0 \times S_0})^{-1} I_{S_0}^T = \frac{ Q \Diag(\sigma_{\tau-1}(\lambda_{-1}), \dots, \sigma_{\tau-1}(\lambda_{-n})) Q^T }{ \sigma_{\tau}(\lambda) }.
\end{equation}
Thus, we have managed to express the expectation solely in terms of the eigenvectors and eigenvalues of $B$. However, our new expression for the expectation is still difficult to work with because each elementary symmetric polynomial is in fact a very complex sum. Fortunately, recall that we do not need the expectation itself but only a suitable lower bound for it. To obtain such a bound, it is convenient to introduce
\begin{definition}[$\tau$-coordinate approximation]
Let $B$ an $n \times n$ real symmetric positive semidefinite matrix with eigenvalues $\lambda_1 \geq \dots \geq \lambda_n$, let $1 \leq \tau \leq \Rank(B)$, and let $B = Q \Diag(\lambda_1, \dots, \lambda_n) Q^T$ be a spectral decomposition of $B$ for some $n \times n$ orthogonal matrix $Q$. The \emph{$\tau$-coordinate approximation of $B$}, denoted by $B_\tau$, is the $n \times n$ real positive semidefinite matrix
\begin{equation}\label{def-b-tau}
B_\tau := Q \Diag(\lambda_1, \dots, \lambda_\tau, \lambda_\tau, \dots, \lambda_\tau) Q^T + \sum_{i=\tau+1}^n \lambda_i I.
\end{equation}
\end{definition}

Observe that $B_\tau$ is non-degenerate since otherwise $\lambda_\tau = \dots = \lambda_n = 0$ which, in view of positive semidefiniteness, contradicts the assumption that $\tau \leq \Rank(B)$. Also note that $B_\tau$ does not depend on the particular orthogonal matrix $Q$ in the spectral decomposition of $B$. Indeed, the first term in the definition of $B_\tau$ can be written as $Q \Diag(q_\tau(\lambda_1), \dots, q_\tau(\lambda_n)) Q^T$, where $q_\tau : \R \to \R$ is the function $q_\tau(t) := \max\{ t, \lambda_\tau \}$. It is well-known that such matrices do not depend on the choice of the diagonalizing matrix $Q$ (see e.g. \cite[Section 7.3]{meyer2000matrix}).

Using the $\tau$-coordinate approximation, we can now lower bound \eqref{eq:expectation_via_sum_simple_case_2} as follows:
\begin{lemma}
Let $B$ an $n \times n$ real symmetric positive semidefinite matrix with eigenvalues $\lambda_1 \geq \dots \geq \lambda_n$, let $1 \leq \tau \leq \Rank(B)$, and let $B = Q \Diag(\lambda_1, \dots, \lambda_n) Q^T$ be a spectral decomposition of $B$ for some $n \times n$ orthogonal matrix $Q$. Then
$$
\frac{ Q \Diag(\sigma_{\tau-1}(\lambda_{-1}), \dots, \sigma_{\tau-1}(\lambda_{-n})) Q^T}{ \sigma_{\tau}(\lambda) } \succeq (B_\tau)^{-1}.
$$
\end{lemma}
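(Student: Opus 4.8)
Since $Q$ is orthogonal, conjugating both sides of the asserted inequality by $Q^T$ shows it is equivalent to a diagonal inequality. Writing $(B_\tau)^{-1} = Q \Diag(\mu_1^{-1}, \dots, \mu_n^{-1}) Q^T$, where $\mu_i := \max\{\lambda_i, \lambda_\tau\} + \sum_{j=\tau+1}^n \lambda_j > 0$ is the eigenvalue of $B_\tau$ associated with the $i$-th column of $Q$ (here we use that $B_\tau$ is non-degenerate), the claim reduces to the scalar inequalities
$$
\frac{\sigma_{\tau-1}(\lambda_{-i})}{\sigma_\tau(\lambda)} \;\geq\; \frac{1}{\mu_i}, \qquad i = 1, \dots, n,
$$
i.e. $\mu_i\, \sigma_{\tau-1}(\lambda_{-i}) \geq \sigma_\tau(\lambda)$. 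So it suffices to establish this last inequality for each $i$.

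First I would split the monomials of $\sigma_\tau(\lambda)$ according to whether the index $i$ occurs in them, obtaining the elementary identity $\sigma_\tau(\lambda) = \lambda_i\, \sigma_{\tau-1}(\lambda_{-i}) + \sigma_\tau(\lambda_{-i})$. Thus $\mu_i\, \sigma_{\tau-1}(\lambda_{-i}) \geq \sigma_\tau(\lambda)$ is equivalent to $(\mu_i - \lambda_i)\, \sigma_{\tau-1}(\lambda_{-i}) \geq \sigma_\tau(\lambda_{-i})$. Next I would perform the (mildly fiddly) bookkeeping step: a short case check, distinguishing $i \leq \tau$ from $i > \tau$, shows that $\mu_i - \lambda_i = \max\{0, \lambda_\tau - \lambda_i\} + \sum_{j=\tau+1}^n \lambda_j$ equals exactly the sum of the $n - \tau$ smallest entries of the $(n-1)$-vector $\lambda_{-i}$. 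Denoting by $a_1 \geq \dots \geq a_{n-1} \geq 0$ the sorted entries of $\lambda_{-i}$, this reads $\mu_i - \lambda_i = a_\tau + a_{\tau+1} + \dots + a_{n-1}$.

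It then remains only to prove the purely combinatorial fact that for any reals $a_1 \geq \dots \geq a_m \geq 0$ and any $1 \leq k \leq m$,
$$
\sigma_k(a) \;\leq\; (a_k + a_{k+1} + \dots + a_m)\, \sigma_{k-1}(a).
$$
For this I would group the monomials of $\sigma_k(a)$ by their largest index $\ell$, which must satisfy $k \leq \ell \leq m$; the monomials with largest index $\ell$ sum to $a_\ell\, \sigma_{k-1}(a_1, \dots, a_{\ell-1})$, so $\sigma_k(a) = \sum_{\ell=k}^m a_\ell\, \sigma_{k-1}(a_1, \dots, a_{\ell-1}) \leq \sum_{\ell=k}^m a_\ell\, \sigma_{k-1}(a)$ by monotonicity of $\sigma_{k-1}$ in the number of nonnegative arguments. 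Applying this with $m = n-1$ and $k = \tau$ (the case $\tau = n$ being a trivial equality) gives $\sigma_\tau(\lambda_{-i}) \leq (\mu_i - \lambda_i)\, \sigma_{\tau-1}(\lambda_{-i})$, which is what was needed. The only step requiring any care is the identification of $\mu_i - \lambda_i$ with the sum of the $n - \tau$ smallest entries of $\lambda_{-i}$, since it is precisely this matching that makes the combinatorial inequality tight enough to apply; the rest is routine.
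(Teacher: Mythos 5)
Your proof is correct and follows essentially the same route as the paper: both reduce, via the spectral decomposition, to the per-eigenvalue scalar inequality $\mu_i\,\sigma_{\tau-1}(\lambda_{-i}) \geq \sigma_\tau(\lambda)$ with $\mu_i = \lambda_{\min\{i,\tau\}} + \sum_{j>\tau}\lambda_j$, and both establish it by grouping the monomials of an elementary symmetric polynomial by an extremal index and bounding the inner sums by $\sigma_{\tau-1}(\lambda_{-i})$. Your version is somewhat more explicit than the paper's (which compresses the key step into a ``by symmetry'' remark), in particular in identifying $\mu_i - \lambda_i$ with the sum of the $n-\tau$ smallest entries of $\lambda_{-i}$, but the underlying argument is the same.
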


\begin{proof}
Since the eigenvalues are non-negative, we have
$$
\sigma_{\tau}(\lambda) = \sum_{i_1=1}^{n-\tau+1} \lambda_{i_1} \sum_{i_1+1 \leq i_2 < \dots < i_\tau \leq n} \lambda_{i_2} \dots \lambda_{i_\tau}  \leq \sigma_{\tau-1}(\lambda_{-1}) \sum_{i=1}^{n-\tau+1} \lambda_i.
$$
By the symmetry of elementary symmetric polynomials, this can strengthened to
$$
\sigma_\tau(\lambda) \leq \sigma_{\tau-1}(\lambda_{-1}) \left( \lambda_1 + \sum_{j=\tau+1}^n \lambda_j \right),
$$
which in turn can be further generalized to
$$
\sigma_\tau(\lambda) \leq \sigma_{\tau-1}(\lambda_{-i}) \left( \lambda_{\min\{i, \tau\}} + \sum_{j=\tau+1}^n \lambda_j \right)
$$
for all $1 \leq i \leq n$. The claim follows.
\end{proof}

To summarize, we have obtained that in the case when volume sampling has full support, we can replace \eqref{eq:sufficient_decrease_lemma_1} with
\begin{equation}\label{eq:sufficient_decrease_2_aux}
f(x_0) - \E f(x_1) \geq \frac{1}{2} \| \nabla f(x_0) \|_{(B_\tau)^{-1}}^2.
\end{equation}
It remains to show that exactly the same result holds even when some $\tau \times \tau$ principal submatrices of $B$ are possibly degenerate. In this case, instead of \eqref{eq:expectation_via_sum_simple_case} we should write more carefully that
$$
\E I_{S_0} (B_{S_0 \times S_0})^{-1} I_{S_0}^T = \frac{ \sum_{S \in {[n] \choose \tau} : \Det(B_{S \times S}) \neq 0} I_S \Adj(B_{S \times S}) I_S^T }{\sum_{S \in {[n] \choose \tau}} \Det(B_{S \times S})}.
$$

Unfortunately, we cannot use Lemma~\ref{lemma:sum_of_adjugates_of_principal_submatrices} anymore because now \eqref{eq:sum_of_adjugates_of_principal_submatrices} \emph{overestimates} (and not underestimates) the numerator due to the fact that the adjugate to a symmetric positive semidefinite matrix is also symmetric positive semidefinite. However, recall that we are not interested in the numerator itself, but only in how it acts on the gradient $\nabla f(x_0)$. For each $1 \leq \tau \leq n$, define the linear subspace
$$
U_\tau(B) := \{ u \in \R^n : \text{$u_S \in \Img(B_{S \times S})$ for all $S \in {\textstyle{[n] \choose \tau}}$ with $\Det(B_{S \times S}) = 0$} \},
$$
where $\Img(B_{S \times S})$ is the image space of $B_{S \times S}$. Observe that $U_\tau(B) = \R^n$ if and only if all $\tau \times \tau$ principal submatrices of $B$ are non-degenerate. Our interest in the subspace $U_\tau(B)$ lies in the following observation:
\begin{lemma}\label{lemma:position_of_the_gradient}
Let $f : \R^n \to \R$ be a 1-smooth function with respect to the seminorm $\| \cdot \|_B$, where $B$ is an $n \times n$ real symmetric positive semidefinite matrix. If $f$ is bounded from below, then for each $x_0 \in \R^n$ and each $1 \leq \tau \leq n$, we have $\nabla f(x_0) \in U_\tau(B)$.
\end{lemma}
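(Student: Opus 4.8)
The plan is to unwind the definition of $U_\tau(B)$: we must show that for every $S \in {[n] \choose \tau}$ with $\Det(B_{S \times S}) = 0$, the subvector $\nabla f(x_0)_S$ lies in $\Img(B_{S \times S})$. Since $B_{S \times S}$ is symmetric, $\Img(B_{S \times S}) = \Ker(B_{S \times S})^\perp$, so it is enough to prove that $\langle \nabla f(x_0)_S, h \rangle = 0$ for every $h \in \Ker(B_{S \times S}) \subseteq \R^\tau$. Fix such an $h$.

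The key step is to exploit smoothness along the ``flat'' direction $h$. Applying the coordinate version of smoothness \eqref{eq:smoothness_wrt_B_coordinate_version} at the base point $x_0$ with increment $t h$ for an arbitrary $t \in \R$, and using that $h \in \Ker(B_{S \times S})$ forces $\| h \|_{B_{S \times S}}^2 = \langle B_{S \times S} h, h \rangle = 0$, the quadratic term drops out and we obtain the \emph{affine} upper bound
$$
f(x_0 + t I_S h) \leq f(x_0) + t \langle \nabla f(x_0)_S, h \rangle \qquad \text{for all } t \in \R.
$$
If $\langle \nabla f(x_0)_S, h \rangle \neq 0$, then letting $t \to +\infty$ or $t \to -\infty$ (whichever makes $t \langle \nabla f(x_0)_S, h \rangle \to -\infty$) sends the right-hand side to $-\infty$, contradicting the assumption that $f$ is bounded from below. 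Hence $\langle \nabla f(x_0)_S, h \rangle = 0$, and since $h$ was an arbitrary element of $\Ker(B_{S \times S})$, we conclude $\nabla f(x_0)_S \in \Ker(B_{S \times S})^\perp = \Img(B_{S \times S})$. As this holds for every degenerate $S$, we get $\nabla f(x_0) \in U_\tau(B)$.

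There is no real obstacle here; the proof is short and elementary. The only points needing a moment of care are the two standard linear-algebra facts for symmetric positive semidefinite matrices — namely $\Img(B_{S \times S}) = \Ker(B_{S \times S})^\perp$, and the equivalence of $h \in \Ker(B_{S \times S})$ with $\langle B_{S \times S} h, h \rangle = 0$ (which is what makes the smoothness estimate collapse to an affine function along $h$) — together with the observation that \eqref{eq:smoothness_wrt_B_coordinate_version}, being an immediate consequence of \eqref{eq:smoothness_wrt_B}, is valid at an arbitrary base point and for an arbitrary subset $S$, not just at $x_0$ with the sampled $S_0$.
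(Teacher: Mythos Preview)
Your proof is correct and follows essentially the same approach as the paper's: fix a degenerate $S$, apply the smoothness inequality along a kernel direction $h$ so that the quadratic term vanishes, and use boundedness from below to force $\langle \nabla f(x_0)_S, h \rangle = 0$, hence $\nabla f(x_0)_S \in \Ker(B_{S\times S})^\perp = \Img(B_{S\times S})$. If anything, your write-up is slightly more explicit in quantifying over all $h$ in the kernel, whereas the paper's version is terser.
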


\begin{proof}
Let $S \in {[n] \choose \tau}$ be such that $\Det(B_{S \times S}) = 0$ (if there is no such $S$, the claim is vacuously true). Then the kernel $\Ker(B_{S \times S})$ is non-trivial and hence there exists a non-zero $h \in \Ker(B_{S \times S})$. From smoothness of $f$ with respect to $\| \cdot \|_B$, it follows that $f(x_0 + t I_S h) \leq f(x_0) + t \langle \nabla f(x_0)_S, h \rangle$ for all $t \in \R$. Hence, $\nabla f(x_0)_S \in \Ker(B_{S \times S})^\perp = \Img(B_{S \times S})$, otherwise $f$ is unbounded from below.
\end{proof}

According to Lemma~\ref{lemma:position_of_the_gradient} and the above remarks, we are interested only in the action of $\sum_{S \in {[n] \choose \tau} : \Det(B_{S \times S}) \neq 0} I_S \Adj(B_{S \times S}) I_S^T$ on the subspace $U_\tau(B)$. But one can easily see that on this subspace it acts exactly as the already studied matrix $\sum_{S \in {[n] \choose \tau}} I_S \Adj(B_{S \times S}) I_S^T$, and so the case of degenerate submatrices reduces to that of non-degenerate ones.

Thus, regardless of whether there are degenerate principal submatrices or not, we have proved
\begin{lemma}[Sufficient decrease lemma for volume sampling]\label{lemma:sufficient_decrease_lemma_2}
Let $f : \R^n \to \R$ be a function which is bounded from below and 1-smooth with respect to the seminorm $\| \cdot \|_B$, where $B$ is an $n \times n$ real symmetric positive semidefinite matrix. Let $x_0 \in \R^n$, let $S_0 \sim \Vol_{\tau}(B)$ for some $1 \leq \tau \leq \Rank(B)$, and let $x_1 := x_0 - I_{S_0} (B_{S_0 \times S_0})^{-1} \nabla f(x_0)_{S_0}$. Then $f(x_1) \leq f(x_0)$ almost surely and
\begin{equation}
f(x_0) - \E f(x_1) \geq \frac{1}{2} \| \nabla f(x_0) \|_{(B_\tau)^{-1}}^2.
\end{equation}
\end{lemma}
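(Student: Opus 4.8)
The plan is to derive this lemma by specializing the general sufficient decrease estimate of Lemma~\ref{lemma:sufficient_decrease_lemma_1} to the volume sampling distribution, feeding in the closed-form expressions for the two matrix sums from Lemmas~\ref{lemma:sum_of_principal_minors} and~\ref{lemma:sum_of_adjugates_of_principal_submatrices}. Since $\Vol_\tau(B)$ by construction assigns zero probability to every $S$ with $\Det(B_{S\times S})=0$, the matrix $B_{S_0\times S_0}$ is non-degenerate almost surely, so Lemma~\ref{lemma:sufficient_decrease_lemma_1} applies verbatim: it already gives $f(x_1)\le f(x_0)$ almost surely, and reduces the remaining task to showing that $\|\nabla f(x_0)\|_M^2 \ge \|\nabla f(x_0)\|_{(B_\tau)^{-1}}^2$, where $M := \E I_{S_0}(B_{S_0\times S_0})^{-1}I_{S_0}^T$.

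First I would treat the case in which $\Vol_\tau(B)$ has full support, i.e. every $\tau\times\tau$ principal submatrix of $B$ is non-degenerate. Here Cramer's rule $\Det(B_{S\times S})(B_{S\times S})^{-1}=\Adj(B_{S\times S})$ expresses $M$ as $\bigl(\sum_S I_S\Adj(B_{S\times S})I_S^T\bigr)/\bigl(\sum_S \Det(B_{S\times S})\bigr)$, as in~\eqref{eq:expectation_via_sum_simple_case}; substituting Lemma~\ref{lemma:sum_of_principal_minors} into the denominator and Lemma~\ref{lemma:sum_of_adjugates_of_principal_submatrices} into the numerator turns this into~\eqref{eq:expectation_via_sum_simple_case_2}, and the preceding (unnumbered) lemma shows that the resulting matrix is $\succeq (B_\tau)^{-1}$. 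Hence $M\succeq(B_\tau)^{-1}$, so the quadratic-form inequality holds for every vector, in particular for $\nabla f(x_0)$.

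It then remains to remove the full-support assumption. When some blocks are singular, the correct formula is $M = \bigl(\sum_{S:\Det(B_{S\times S})\ne0} I_S\Adj(B_{S\times S})I_S^T\bigr)/\sigma_\tau(\lambda)$, and one can no longer claim $M\succeq(B_\tau)^{-1}$ as a matrix inequality, because the (PSD) adjugates of the singular blocks are missing from the numerator and adding them back would only increase it. The fix is to work solely along the direction $\nabla f(x_0)$: by Lemma~\ref{lemma:position_of_the_gradient} (which is where boundedness from below is used), $\nabla f(x_0)$ lies in $U_\tau(B)$, and for any $u\in U_\tau(B)$ and any singular $S$ one has $u_S\in\Img(B_{S\times S})=\Ker(B_{S\times S})^\perp$, while $\Adj(B_{S\times S})$ maps $\Ker(B_{S\times S})^\perp$ to $0$ (it is the zero matrix if $\Rank(B_{S\times S})\le\tau-2$, and rank-one with column space equal to $\Ker(B_{S\times S})$ if $\Rank(B_{S\times S})=\tau-1$). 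Thus $I_S\Adj(B_{S\times S})I_S^T u = 0$ for all singular $S$, so on $U_\tau(B)$ the truncated sum agrees with the full sum, giving $\langle Mu,u\rangle = \langle (Q\Diag(\sigma_{\tau-1}(\lambda_{-1}),\dots,\sigma_{\tau-1}(\lambda_{-n}))Q^T/\sigma_\tau(\lambda))u,u\rangle \ge \langle(B_\tau)^{-1}u,u\rangle$ by the same unnumbered lemma. Applying this with $u=\nabla f(x_0)$ and plugging back into Lemma~\ref{lemma:sufficient_decrease_lemma_1} closes both cases.

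I expect the only genuinely delicate point to be the claim in the last paragraph that the adjugate term attached to a singular block annihilates $U_\tau(B)$; everything else is bookkeeping around results already in hand. It rests on the elementary but easy-to-overlook fact about the adjugate of a rank-deficient symmetric matrix, and on recognizing that Lemma~\ref{lemma:position_of_the_gradient} is precisely the statement that $\nabla f(x_0)$ stays clear of those "bad" directions — which is also the sole place where the hypothesis that $f$ is bounded from below is actually needed.
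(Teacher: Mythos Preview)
Your proposal is correct and follows essentially the same route as the paper's own argument in Section~\ref{section:sufficient_decrease_lemma}: apply Lemma~\ref{lemma:sufficient_decrease_lemma_1}, combine Lemmas~\ref{lemma:sum_of_principal_minors} and~\ref{lemma:sum_of_adjugates_of_principal_submatrices} via Cramer's rule and the unnumbered lower bound to handle the full-support case, and then use Lemma~\ref{lemma:position_of_the_gradient} to reduce the degenerate case to the full-support one by showing the missing adjugate terms vanish on $U_\tau(B)$. Your justification of that last step (the rank dichotomy for $\Adj(B_{S\times S})$ and the fact that its image lies in $\Ker(B_{S\times S})$) is exactly the content the paper leaves to ``one can easily see.''
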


To finish this section, let us establish the following relations between $\tau$-coordinate approximations that will be central in the forthcoming convergence analysis.

\begin{lemma}\label{lemma:relations_between_different_tau}
Let $B$ be an $n \times n$ real symmetric positive semidefinite matrix with eigenvalues $\lambda_1 \geq \dots \geq \lambda_n$, and let $1 \leq \tau_1 < \tau_2 \leq \Rank(B)$. Then
$$
B_{\tau_2} \preceq B_{\tau_1} \preceq R_{\lambda}(\tau_1, \tau_2) B_{\tau_2},
$$
where $R_{\lambda}(\tau_1, \tau_2)$ is defined by \eqref{eq:theoretical_acceleration_ratio}.
\end{lemma}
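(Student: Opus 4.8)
The plan is to work entirely in the eigenbasis of $B$, where all the matrices involved are simultaneously diagonal, so that the Loewner-order inequalities reduce to scalar inequalities on the diagonal entries. Recall from the definition of the $\tau$-coordinate approximation that
$$
B_\tau = Q \Diag\bigl(q_\tau(\lambda_1), \dots, q_\tau(\lambda_n)\bigr) Q^T + \Bigl( \sum_{i=\tau+1}^n \lambda_i \Bigr) I,
$$
where $q_\tau(t) := \max\{t, \lambda_\tau\}$, so $q_\tau(\lambda_i) = \lambda_i$ for $i \leq \tau$ and $q_\tau(\lambda_i) = \lambda_\tau$ for $i > \tau$. Hence $B_\tau = Q \Diag(\mu_1^{(\tau)}, \dots, \mu_n^{(\tau)}) Q^T$ with
$$
\mu_i^{(\tau)} = q_\tau(\lambda_i) + \sum_{j=\tau+1}^n \lambda_j =
\begin{cases}
\lambda_i + \sum_{j=\tau+1}^n \lambda_j, & i \leq \tau, \\[1mm]
\sum_{j=\tau}^n \lambda_j, & i > \tau.
\end{cases}
$$
In particular, for $i > \tau$ one has $\mu_i^{(\tau)} = \sum_{j=\tau}^n \lambda_j$, and since $\lambda_i \leq \lambda_\tau$ for such $i$, for $i \leq \tau$ one has $\mu_i^{(\tau)} = \lambda_i + \sum_{j=\tau+1}^n \lambda_j \leq \lambda_\tau + \sum_{j=\tau+1}^n\lambda_j = \sum_{j=\tau}^n \lambda_j$ as well; so $\mu_\tau^{(\tau)} = \max_i \mu_i^{(\tau)} = \sum_{j=\tau}^n \lambda_j$, a fact I will use for the upper bound.

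For the left inequality $B_{\tau_2} \preceq B_{\tau_1}$, since both matrices are diagonal in the basis $Q$, it suffices to show $\mu_i^{(\tau_2)} \leq \mu_i^{(\tau_1)}$ for every $i$; equivalently, that $\tau \mapsto \mu_i^{(\tau)}$ is non-increasing. The cleanest way is to note $\mu_i^{(\tau)} = \sum_{j=1}^n \max\{\lambda_j, \lambda_\tau\} \mathbf{1}[j = i] + \dots$; more directly, write $\mu_i^{(\tau)} = q_\tau(\lambda_i) + \sum_{j>\tau}\lambda_j$ and check that passing from $\tau$ to $\tau+1$ changes it by $(q_{\tau+1}(\lambda_i) - q_\tau(\lambda_i)) - \lambda_{\tau+1}$. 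Since $\lambda_{\tau+1} \leq \lambda_\tau$, the function $q$ satisfies $q_{\tau+1}(\lambda_i) - q_\tau(\lambda_i) \in \{0, \lambda_{\tau+1} - \lambda_\tau\} \cup \{\dots\}$ — concretely it is $0$ when $\lambda_i \geq \lambda_\tau$, and it equals $\lambda_{\tau+1} - \max\{\lambda_i,\lambda_{\tau+1}\} + \dots$; in all cases the increment of $q$ is $\leq 0$, hence the total increment is $\leq -\lambda_{\tau+1} \leq 0$. This gives monotonicity and the left inequality.

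For the right inequality $B_{\tau_1} \preceq R_\lambda(\tau_1,\tau_2) B_{\tau_2}$, again it suffices to prove $\mu_i^{(\tau_1)} \leq R_\lambda(\tau_1,\tau_2)\, \mu_i^{(\tau_2)}$ for each $i$. I will use $\mu_i^{(\tau_1)} \leq \mu_{\tau_1}^{(\tau_1)} = \sum_{j=\tau_1}^n \lambda_j$ (from the computation above) for the numerator, and for the denominator I will bound $\mu_i^{(\tau_2)}$ from below by its smallest value. Every $\mu_i^{(\tau_2)}$ is at least $\min_i \mu_i^{(\tau_2)}$, and since $\mu_i^{(\tau_2)} = \lambda_i + \sum_{j>\tau_2}\lambda_j$ for $i \leq \tau_2$ and $= \sum_{j \geq \tau_2}\lambda_j$ for $i > \tau_2$, the minimum over $i$ is attained at $i = \tau_2$, giving $\mu_i^{(\tau_2)} \geq \lambda_{\tau_2} + \sum_{j=\tau_2+1}^n \lambda_j = \sum_{j=\tau_2}^n \lambda_j$ — wait, this also equals $\mu_{\tau_2}^{(\tau_2)}$, the maximum, so in fact $\mu_i^{(\tau_2)} = \sum_{j=\tau_2}^n\lambda_j$ for $i \leq \tau_2$ is false unless $\lambda_i = \lambda_{\tau_2}$; the honest statement is $\min_i \mu_i^{(\tau_2)} = \lambda_n + \sum_{j>\tau_2}\lambda_j$ when $n > \tau_2$, attained at large $i$. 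Thus the bound I actually need is the per-coordinate one: for $i > \tau_1$, both $\mu_i^{(\tau_1)} = \sum_{j\geq\tau_1}\lambda_j$ and I compare to $\mu_i^{(\tau_2)} \geq \sum_{j\geq\tau_2}\lambda_j$ (true since $\mu_i^{(\tau_2)} \geq \mu_{\tau_2}^{(\tau_2)}$... no). The right way: since $B_{\tau_1} \succeq B_{\tau_2}$ is already proved, and both are diagonal, $\mu_i^{(\tau_1)} \geq \mu_i^{(\tau_2)}$; the ratio $\mu_i^{(\tau_1)}/\mu_i^{(\tau_2)}$ is therefore $\geq 1$, and I must show it is $\leq R_\lambda(\tau_1,\tau_2) = (\sum_{j\geq\tau_1}\lambda_j)/(\sum_{j\geq\tau_2}\lambda_j)$. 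Numerator: $\mu_i^{(\tau_1)} \leq \mu_{\tau_1}^{(\tau_1)} = \sum_{j\geq\tau_1}\lambda_j$. Denominator: $\mu_i^{(\tau_2)} \geq \mu_n^{(\tau_2)}$? No — I need $\mu_i^{(\tau_2)} \geq \sum_{j\geq\tau_2}\lambda_j$, which holds for $i \leq \tau_2$ iff $\lambda_i \geq \lambda_{\tau_2}$ (true) and for $i > \tau_2$ it holds with equality. So indeed $\mu_i^{(\tau_2)} \geq \sum_{j=\tau_2}^n\lambda_j$ for all $i$, giving $\mu_i^{(\tau_1)}/\mu_i^{(\tau_2)} \leq (\sum_{j\geq\tau_1}\lambda_j)/(\sum_{j\geq\tau_2}\lambda_j) = R_\lambda(\tau_1,\tau_2)$, as desired.

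The main obstacle is purely bookkeeping: getting the two sharp scalar bounds $\mu_i^{(\tau_1)} \leq \sum_{j=\tau_1}^n\lambda_j$ and $\mu_i^{(\tau_2)} \geq \sum_{j=\tau_2}^n\lambda_j$ right for all $i$ by splitting into the cases $i \leq \tau$ and $i > \tau$, and using $\lambda_i \geq \lambda_\tau$ resp. $\lambda_i \leq \lambda_\tau$ together with $\lambda_n \geq 0$ (which is where positive semidefiniteness enters — all these sums of tail eigenvalues are nonnegative, and the denominators are strictly positive because $\tau_2 \leq \Rank(B)$ forces $\sum_{j=\tau_2}^n\lambda_j \geq \lambda_{\tau_2} > 0$). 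Once the diagonalization $B_\tau = Q\Diag(\mu^{(\tau)})Q^T$ is written down explicitly, everything else is elementary.
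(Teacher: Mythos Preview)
Your overall approach --- simultaneously diagonalize $B_{\tau_1}$ and $B_{\tau_2}$ and reduce to scalar inequalities on the eigenvalues $\mu_i^{(\tau)}$ --- is exactly right, and coincides with the paper's route. The left inequality (monotonicity in $\tau$) is fine. However, the argument for the right inequality contains a genuine error.

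You claim that $\mu_\tau^{(\tau)} = \max_i \mu_i^{(\tau)} = \sum_{j=\tau}^n \lambda_j$ and then use $\mu_i^{(\tau_1)} \leq \mu_{\tau_1}^{(\tau_1)}$ as the numerator bound. The direction is reversed: since $\lambda_1 \geq \dots \geq \lambda_n$, for $i \leq \tau$ one has $\lambda_i \geq \lambda_\tau$, so $\mu_i^{(\tau)} = \lambda_i + \sum_{j>\tau}\lambda_j \geq \lambda_\tau + \sum_{j>\tau}\lambda_j = \sum_{j\geq\tau}\lambda_j$. Thus $\mu_\tau^{(\tau)}$ is the \emph{minimum} of the $\mu_i^{(\tau)}$, not the maximum; your denominator bound $\mu_i^{(\tau_2)} \geq \sum_{j\geq\tau_2}\lambda_j$ is correct precisely for this reason, but the analogous numerator bound $\mu_i^{(\tau_1)} \leq \sum_{j\geq\tau_1}\lambda_j$ is false for every $i < \tau_1$. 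Concretely, take $n=4$, $\lambda=(100,50,10,1)$, $\tau_1=2$: then $\mu_1^{(2)} = 100+11 = 111$, while $\sum_{j\geq 2}\lambda_j = 61$.

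So the crude bound (max of numerator)/(min of denominator) does not give $R_\lambda(\tau_1,\tau_2)$; one really has to compare coordinate by coordinate. The paper does exactly this: writing $\Sigma_i := \sum_{j\geq i}\lambda_j$, it computes the ratio $s_i := \mu_i^{(\tau_1)}/\mu_i^{(\tau_2)}$ in each of the three regimes $i \leq \tau_1$, $\tau_1 < i \leq \tau_2$, $i > \tau_2$, and shows $1 \leq s_1 \leq \dots \leq s_n = \Sigma_{\tau_1}/\Sigma_{\tau_2}$. The only regime where your argument breaks is $i \leq \tau_1$; there $s_i = (\lambda_i + \Sigma_{\tau_1+1})/(\lambda_i + \Sigma_{\tau_2+1})$, and since $t \mapsto (t+a)/(t+b)$ is decreasing when $a>b\geq 0$, the maximum over this regime is at $i=\tau_1$, where $s_{\tau_1} = \Sigma_{\tau_1}/(\lambda_{\tau_1}+\Sigma_{\tau_2+1}) \leq \Sigma_{\tau_1}/\Sigma_{\tau_2}$. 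That is the missing step.
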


\begin{proof}
Denote $\Sigma_i := \sum_{j=i}^n \lambda_j$ for $1 \leq i \leq n$. According to \eqref{def-b-tau}, we have $B_{\tau_1} = Q \Diag(s^{(1)}_1, \ldots, s^{(1)}_n) Q^T$, where
$$
s^{(1)}_i :=
\begin{cases}
\lambda_i + \Sigma_{\tau_1+1}, & \text{if $1 \leq i \leq \tau_1$}, \\
\Sigma_{\tau_1}, & \text{if $\tau_1+1 \leq i \leq n$}.
\end{cases}
$$
Similarly, $B_{\tau_2} = Q \Diag(s^{(2)}_1, \ldots, s^{(2)}_n) Q^T$, where
$$
s^{(2)}_i :=
\begin{cases}
\lambda_i + \Sigma_{\tau_2+1}, & \text{if $1 \leq i \leq \tau_2$}, \\
\Sigma_{\tau_2}, & \text{if $\tau_2+1 \leq i \leq n$}.
\end{cases}
$$
Hence, $(B_{\tau_2})^{-1/2} B_{\tau_1} (B_{\tau_2})^{-1/2} = Q \Diag(s_1, \ldots, s_n) Q^T$, where
$$
s_i :=
\begin{cases}
\frac{\lambda_i + \Sigma_{\tau_1+1}}{\lambda_i + \Sigma_{\tau_2+1}}, & \text{if $1 \leq i \leq \tau_1$}, \\
\frac{\Sigma_{\tau_1}}{\lambda_i + \Sigma_{\tau_2+1}}, & \text{if $\tau_1+1 \leq i \leq \tau_2$}, \\
\frac{\Sigma_{\tau_1}}{\Sigma_{\tau_2}}, & \text{if $\tau_2+1 \leq i \leq n$}.
\end{cases}
$$
To finish the proof, it now remains to show that $s_1, \ldots, s_n$ are bounded from below by 1 and bounded from above by $\frac{\Sigma_{\tau_1}}{\Sigma_{\tau_2}}$.

For $1 \leq i \leq \tau_1$, we have $s_i = 1 + \frac{\Sigma_{\tau_1+1} - \Sigma_{\tau_2+1}}{\lambda_i + \Sigma_{\tau_2+1}}$. Since $\Sigma_{\tau_1+1} \geq \Sigma_{\tau_2+1} \geq 0$, it follows from this representation that $1 \leq s_1 \leq \ldots \leq s_{\tau_1}$ (recall that $\lambda_1 \geq \ldots \geq \lambda_n \geq 0$). Similarly, for $\tau_1 \leq i \leq \tau_2$ (including $\tau_1$), we have $s_i = \frac{\Sigma_{\tau_1}}{\lambda_i + \Sigma_{\tau_2+1}}$, hence $s_{\tau_1} \leq \ldots \leq s_{\tau_2}$. Finally, $s_{\tau_2} = \ldots = s_n$ (including $\tau_2$). Thus, $1 \leq s_1 \leq \ldots \leq s_n = \frac{\Sigma_{\tau_1}}{\Sigma_{\tau_2}}$.
\end{proof}

\subsection{Convex functions}\label{section:convergence_results:convex_functions}

Now we are ready to establish several results on the convergence rate of the RCDVS method. We start with the class of smooth convex functions.

\begin{theorem}[Convergence rate for convex functions]\label{theorem:convergence_for_convex_functions}
Let $f : \R^n \to \R$ be a convex function which is 1-smooth with respect to the seminorm $\| \cdot \|_B$, where $B$ is an $n \times n$ real symmetric positive semidefinite matrix. Let $x_0$ be a deterministic point in $\R^n$ and assume that the sublevel set $L_f(x_0) := \{ x \in \R^n : f(x) \leq f(x_0) \}$ is bounded. Let $1 \leq \tau \leq \Rank(B)$, $K \geq 1$, and let $(x_k)_{k=1}^K$ be the random points in $\R^n$ generated by $\RCDVS(f, B, \tau, x_0, K)$. Then
$$
\E f(x_k) - \min f \leq \frac{2 D_\tau^2}{k+1},
$$
for all $0 \leq k \leq K$, where $D_\tau := \max_{x \in L_f(x_0)} \min_{x^* \in \Argmin f} \| x - x^* \|_{B_{\tau}}$ is the radius of the sublevel set $L_f(x_0)$ measured in the norm $\| \cdot \|_{B_{\tau}}$.
\end{theorem}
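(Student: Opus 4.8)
The plan is to combine the per-step decrease guarantee of Lemma~\ref{lemma:sufficient_decrease_lemma_2}, applied conditionally at each iteration, with the elementary convexity bound $f(x)-\min f\le\langle\nabla f(x),x-x^*\rangle$, and then to unroll the resulting scalar recursion for the expected optimality gaps in the standard way.

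First I would collect the preliminary facts. Since $L_f(x_0)$ is bounded and $f$ is continuous, $f$ attains its minimum, so $\Argmin f\neq\emptyset$ and $f$ is bounded from below (so Lemma~\ref{lemma:sufficient_decrease_lemma_2} applies); moreover $\Argmin f\subseteq L_f(x_0)$ is compact, hence $D_\tau$ is a maximum of a continuous function over a compact set, so it is finite, and for every $x\in L_f(x_0)$ there exists $x^*(x)\in\Argmin f$ with $\|x-x^*(x)\|_{B_\tau}\le D_\tau$. Because $B_\tau$ is positive definite (as noted right after its definition), $\|\cdot\|_{B_\tau}$ is a genuine norm whose dual norm is $\|\cdot\|_{(B_\tau)^{-1}}$, so the generalized Cauchy--Schwarz inequality $\langle g,d\rangle\le\|g\|_{(B_\tau)^{-1}}\|d\|_{B_\tau}$ holds. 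Finally, by Lemma~\ref{lemma:sufficient_decrease_lemma_2} the method is monotone, $f(x_{k+1})\le f(x_k)$ almost surely, hence $x_k\in L_f(x_0)$ almost surely for every $k$.

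Next, fix $k$ and condition on the history $\mathcal F_k:=\sigma(S_0,\dots,S_{k-1})$. Then $x_k$ is $\mathcal F_k$-measurable and $S_k\sim\Vol_\tau(B)$ is drawn afresh, so Lemma~\ref{lemma:sufficient_decrease_lemma_2} (applied with the deterministic initial point frozen to the value of $x_k$) gives $f(x_k)-\E[f(x_{k+1})\mid\mathcal F_k]\ge\tfrac12\|\nabla f(x_k)\|_{(B_\tau)^{-1}}^2$ almost surely. On the almost-sure event $\{x_k\in L_f(x_0)\}$, convexity together with Cauchy--Schwarz yields $f(x_k)-\min f\le\langle\nabla f(x_k),x_k-x^*(x_k)\rangle\le D_\tau\|\nabla f(x_k)\|_{(B_\tau)^{-1}}$, so $\|\nabla f(x_k)\|_{(B_\tau)^{-1}}^2\ge(f(x_k)-\min f)^2/D_\tau^2$. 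Writing $\delta_k:=\E f(x_k)-\min f\ge0$, taking total expectations, and using Jensen's inequality $\E[(f(x_k)-\min f)^2]\ge\delta_k^2$ produces the recursion $\delta_k-\delta_{k+1}\ge\delta_k^2/(2D_\tau^2)$ with $\delta_{k+1}\le\delta_k$. It remains to solve it: if $\delta_k=0$ for some $k$ the bound is immediate (and $\delta_j=0$ for $j\ge k$ by monotonicity), so assume $\delta_k>0$ for all $k$; the recursion at $k=0$ gives $\delta_0\le2D_\tau^2$, dividing $\delta_k-\delta_{k+1}\ge\delta_k^2/(2D_\tau^2)$ by $\delta_k\delta_{k+1}>0$ and using $\delta_{k+1}\le\delta_k$ gives $\tfrac1{\delta_{k+1}}-\tfrac1{\delta_k}\ge\tfrac1{2D_\tau^2}$, and summing over $0,\dots,k-1$ together with $\tfrac1{\delta_0}\ge\tfrac1{2D_\tau^2}$ gives $\tfrac1{\delta_k}\ge\tfrac{k+1}{2D_\tau^2}$, i.e. $\delta_k\le\tfrac{2D_\tau^2}{k+1}$, as claimed.

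Regarding difficulty: once Lemma~\ref{lemma:sufficient_decrease_lemma_2} is available, no step is substantial. The only points that need a little care are the conditioning argument that lets one invoke a lemma stated for a \emph{deterministic} starting point at the \emph{random} iterate $x_k$ (handled by conditioning on $\mathcal F_k$), the almost-sure membership $x_k\in L_f(x_0)$ that makes the deterministic constant $D_\tau$ legitimately bound $\|x_k-x^*(x_k)\|_{B_\tau}$, and the Jensen step converting $\E[(f(x_k)-\min f)^2]$ into $\delta_k^2$. None of these is a genuine obstacle.
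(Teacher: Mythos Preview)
Your proposal is correct and follows essentially the same route as the paper's proof: invoke Lemma~\ref{lemma:sufficient_decrease_lemma_2} (conditionally) for the per-step decrease, combine convexity with the $B_\tau$-Cauchy--Schwarz inequality and the definition of $D_\tau$, apply Jensen, and then unroll the recursion $\delta_k-\delta_{k+1}\ge\delta_k^2/(2D_\tau^2)$ via $1/\delta_{k+1}-1/\delta_k\ge1/(2D_\tau^2)$ together with $\delta_0\le 2D_\tau^2$. If anything, you are slightly more explicit than the paper about the conditioning on $\mathcal F_k$ needed to apply the lemma at a random iterate.
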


\begin{proof}
See Section~\ref{sec-proof-conv}.
\end{proof}

According to Theorem~\ref{theorem:convergence_for_convex_functions}, for achieving accuracy $\varepsilon > 0$ in terms of the expected value of the objective, one needs the following number of iterations:
$$
K_\tau := \frac{2 D_\tau^2}{\varepsilon}.
$$
In particular, for $\tau = 1$, we have $D_{\tau}^2 = \Tr(B) D^2$, where $D$ is the radius of the sublevel set $L_f(x_0)$ measured in the standard Euclidean norm; this recovers the already known result for the RCD method \cite{nesterov2012efficiency}.

Let us fix $1 \leq \tau_1 < \tau_2 \leq \Rank(B)$ and compare the efficiency estimates of RCDVS with $\tau_1$ coordinates with that of $\tau_2$ coordinates. We obtain that
$$
\frac{K_{\tau_1}}{K_{\tau_2}} = \frac{D_{\tau_1}^2}{D_{\tau_2}^2}.
$$
Thus, we need to compare the quantities $D_{\tau_1}^2$ and $D_{\tau_2}^2$. By Lemma~\ref{lemma:relations_between_different_tau}, we have
$$
\| x - x^* \|_{B_{\tau_2}}^2 \leq \| x - x^* \|_{B_{\tau_1}}^2 \leq R_{\lambda}(\tau_1, \tau_2) \| x - x^* \|_{B_{\tau_2}}^2
$$
for all $x, x^* \in \R^n$. Hence, by first minimizing in $x^* \in \Argmin f$, and then maximizing in $x \in L_f(x_0)$, we obtain
$$
D_{\tau_2}^2 \leq D_{\tau_1}^2 \leq R_{\lambda}(\tau_1, \tau_2) D_{\tau_2}^2.
$$
This means that the method with a bigger number of coordinates is always not worse than the corresponding method with a smaller number of coordinates, but it can also be faster up to the ratio \eqref{eq:theoretical_acceleration_ratio}.

\subsection{Strongly convex functions}\label{section:convergence_results:strongly_convex_functions}

Now let us consider the strongly convex case. For measuring the parameter of strong convexity, it is natural to use the norm $\| \cdot \|_{B_\tau}$. Recall that a differentiable function $f : \R^n \to \R$ is called \emph{strongly convex} with respect to the norm $\| \cdot \|_{B_\tau}$ if there exists $\mu_{\tau} > 0$ such that
\begin{equation}\label{eq:strong_convexity}
f(y) \geq f(x) + \langle \nabla f(x), y - x \rangle + \frac{\mu_\tau}{2} \| y - x \|_{B_\tau}^2
\end{equation}
for all $x, y \in \R^n$. The largest possible value of $\mu_{\tau}$, satisfying \eqref{eq:strong_convexity}, is called the \emph{modulus} of strong convexity. Observe that if $f$ is additionally 1-smooth with respect to $\| \cdot \|_B$ (see \eqref{eq:smoothness_wrt_B}), then we must have $\mu_\tau B_\tau \preceq B$. Thus, $B$ cannot be degenerate in this situation.

\begin{theorem}[Convergence rate for strongly convex functions]\label{theorem:convergence_for_strongly_convex_functions}
Let $f : \R^n \to \R$ be a function which is 1-smooth with respect to the norm $\| \cdot \|_B$, where $B$ is an $n \times n$ real symmetric positive definite matrix, let $1 \leq \tau \leq n$, and let $f$ be strongly convex with respect to the norm $\| \cdot \|_{B_{\tau}}$ with modulus $\mu_\tau$. Let $x_0$ be a deterministic point in $\R^n$, let $K \geq 1$, and let $(x_k)_{k=1}^K$ be the random points in $\R^n$ generated by $\RCDVS(f, B, \tau, x_0, K)$. Then
\begin{equation}\label{eq:convergence_for_strongly_convex_functions}
\E f(x_k) - \min f \leq (1 - \mu_\tau)^k (f(x_0) - \min f)
\end{equation}
for all $0 \leq k \leq K$.
\end{theorem}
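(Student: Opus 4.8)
The plan is to turn the sufficient decrease estimate of Lemma~\ref{lemma:sufficient_decrease_lemma_2} into a one-step linear contraction by combining it with a gradient-domination (Polyak--{\L}ojasiewicz-type) inequality that follows from strong convexity measured in $\|\cdot\|_{B_\tau}$, and then to unroll the resulting recursion over $k$.

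Two preliminary remarks set the stage. First, since $f$ is strongly convex it is bounded from below and attains its minimum, so $\min f$ is finite and Lemma~\ref{lemma:sufficient_decrease_lemma_2} applies; its other hypothesis, that $B_{S_k \times S_k}$ is non-degenerate almost surely, holds by the very definition of $\Vol_\tau(B)$, whose support consists only of subsets $S$ with $\Det(B_{S\times S})>0$. Second, because $B$ is positive definite and $\tau \le n = \Rank(B)$, the matrix $B_\tau$ of \eqref{def-b-tau} is positive definite, so $\|\cdot\|_{B_\tau}$ and $\|\cdot\|_{(B_\tau)^{-1}}$ are genuine norms; moreover, the relation $\mu_\tau B_\tau \preceq B$ observed before the theorem gives $\mu_\tau \le 1$, hence $1-\mu_\tau \in [0,1)$ and \eqref{eq:convergence_for_strongly_convex_functions} is a genuine contraction.

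The gradient-domination step is to minimize the right-hand side of \eqref{eq:strong_convexity} over $y$. For fixed $x$, the quadratic $y \mapsto f(x) + \langle \nabla f(x), y-x\rangle + \frac{\mu_\tau}{2}\|y-x\|_{B_\tau}^2$ attains its minimum at $y = x - \frac{1}{\mu_\tau}(B_\tau)^{-1}\nabla f(x)$ with value $f(x) - \frac{1}{2\mu_\tau}\|\nabla f(x)\|_{(B_\tau)^{-1}}^2$. Since $f(y) \ge \min f$ for every $y$, this yields
$$
\|\nabla f(x)\|_{(B_\tau)^{-1}}^2 \;\ge\; 2\mu_\tau\bigl(f(x) - \min f\bigr) \qquad \text{for all } x \in \R^n.
$$

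Finally I would assemble the recursion. Fix $k$ and condition on $x_k$: then $x_k$ is a deterministic point and $S_k \sim \Vol_\tau(B)$ is drawn afresh, so applying Lemma~\ref{lemma:sufficient_decrease_lemma_2} with the roles $x_0 \leftarrow x_k$, $S_0 \leftarrow S_k$, $x_1 \leftarrow x_{k+1}$, followed by the displayed inequality at $x = x_k$, gives
$$
f(x_k) - \E\bigl[f(x_{k+1}) \mid x_k\bigr] \;\ge\; \frac12 \|\nabla f(x_k)\|_{(B_\tau)^{-1}}^2 \;\ge\; \mu_\tau\bigl(f(x_k) - \min f\bigr),
$$
hence $\E[f(x_{k+1}) - \min f \mid x_k] \le (1-\mu_\tau)(f(x_k) - \min f)$. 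Taking total expectations and using the tower property gives $\E f(x_{k+1}) - \min f \le (1-\mu_\tau)(\E f(x_k) - \min f)$, and iterating this down to $k=0$ gives \eqref{eq:convergence_for_strongly_convex_functions}. I do not expect a genuine obstacle here: the computational content is entirely absorbed into the already-established Lemma~\ref{lemma:sufficient_decrease_lemma_2}, and the only points requiring a little care are checking the hypotheses of that lemma and handling the conditioning cleanly (the draws $S_k$ being independent of the past given $x_k$).
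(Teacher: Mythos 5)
Your proposal is correct and follows essentially the same route as the paper: the sufficient decrease bound of Lemma~\ref{lemma:sufficient_decrease_lemma_2}, combined with the standard Polyak--{\L}ojasiewicz inequality $\frac{1}{2\mu_\tau}\|\nabla f(x_k)\|_{(B_\tau)^{-1}}^2 \geq f(x_k)-\min f$ obtained by minimizing the strong-convexity bound in $y$, yields the one-step contraction, which is then iterated. Your explicit handling of the conditioning on $x_k$ and the verification of the lemma's hypotheses are just more detailed renderings of steps the paper leaves implicit.
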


\begin{proof}
See Section~\ref{sec-proof-sconv}.
\end{proof}

Since $1 - \mu_\tau \leq e^{-\mu_\tau}$, this means that, given any $\varepsilon > 0$, for achieving accuracy $\varepsilon$ in terms of the expected value of the objective, one needs the following number of iterations:
$$
K_{\tau} := \frac{1}{\mu_\tau} \ln \frac{f(x_0) - \min f}{\varepsilon}.
$$
For $\tau=1$, we have $\mu_\tau = \Tr(B) \mu$, where $\mu$ is the strong convexity parameter of $f$ in the standard Euclidean norm. This recovers the convergence rate of the RCD method for strongly convex functions from \cite{nesterov2012efficiency}.

Similarly to the discussion in Section~\ref{section:convergence_results:convex_functions}, let us compare the efficiency estimates for different values of $\tau$. Fix $1 \leq \tau_1 < \tau_2 \leq n$. Then, the acceleration rate equals
$$
\frac{K_{\tau_1}}{K_{\tau_2}} = \frac{\mu_{\tau_2}}{\mu_{\tau_1}}.
$$
Let us compare the constants $\mu_{\tau_1}$ and $\mu_{\tau_2}$. By Lemma~\ref{lemma:relations_between_different_tau}, for all $h \in \R^n$, we have
\begin{equation}\label{str-conv-aux}
\| h \|_{B_{\tau_2}}^2 \leq \| h \|_{B_{\tau_1}}^2 \leq R_{\lambda}(\tau_1, \tau_2) \| h \|_{B_{\tau_2}}^2
\end{equation}
Hence, by strong convexity of $f$ in the $B_{\tau_1}$-norm, it follows that
$$
f(y) - f(x) - \langle \nabla f(x), y - x \rangle \geq \frac{\mu_{\tau_1}}{2} \| y - x \|_{B_{\tau_1}}^2 \geq \frac{\mu_{\tau_1}}{2} \| y - x \|_{B_{\tau_2}}^2
$$
for all $x, y \in \R^n$. This means that the modulus of strong convexity of $f$ with respect to $\| \cdot \|_{B_{\tau_2}}$ is at least $\mu_{\tau_1}$, i.e. $\mu_{\tau_2} \geq \mu_{\tau_1}$. Similarly, combining the strong convexity of $f$ in the $B_{\tau_2}$-norm with the second inequality in \eqref{str-conv-aux}, we obtain
$$
f(y) - f(x) - \langle \nabla f(x), y - x \rangle \geq \frac{\mu_{\tau_2}}{2} \| y - x \|_{B_{\tau_2}}^2 \geq \frac{\mu_{\tau_2}}{2 R_{\lambda}(\tau_1, \tau_2)} \| y - x \|_{B_{\tau_1}}^2
$$
for all $x \in \R^n$. This means that $\mu_{\tau_1} \geq \frac{\mu_{\tau_2}}{R_{\lambda}(\tau_1, \tau_2)}$. Hence, our reasoning shows that
$$
\mu_{\tau_1} \leq \mu_{\tau_2} \leq R_{\lambda}(\tau_1, \tau_2) \mu_{\tau_1}.
$$
Thus, we obtain absolutely the same result as in the previous section: the efficiency of the method monotonically improves with $\tau$ and the acceleration factor can reach the ratio \eqref{eq:theoretical_acceleration_ratio} (for example, one can verify that this is the case for a strictly convex quadratic function).

\begin{remark}
The results, presented in this section, can be straightforwardly extended from strongly convex functions to a more broader class of \emph{gradient dominated functions of degree 2}, also known as the functions satisfying the \emph{Polyak--\L{}ojasiewicz condition}. For more information and different examples of such functions, see \cite{karimi2016linear}. Note also that recently, in the context of coordinate descent methods, there has appeared an even more general condition, called \emph{Generalized Error Bound Property} \cite{necoara2016parallel}. However, we do not know whether our results can be extended for this property.
\end{remark}

\subsection{Proof of Lemma~\ref{lemma:sum_of_adjugates_of_principal_submatrices}}\label{section:proof_of_key_estimate}

In this section, we give the proof of Lemma~\ref{lemma:sum_of_adjugates_of_principal_submatrices} assuming that $n \geq 2$ (otherwise the claim is trivial). We start with introducing a little new notation that will be used only inside this section. For a subset $S \in {[n] \choose \tau}$, by $I_{-S}$ we denote the $n \times (n-\tau)$ matrix obtained from the $n \times n$ identity matrix $I$ by removing the columns with indices from $S$ (i.e. $I_{-S} := I_{[n] \setminus S}$). For an $n \times n$ matrix $B$ and a subset $S \in {[n] \choose \tau}$, by $B_{-S \times -S}$ we denote the $(n-\tau) \times (n-\tau)$ submatrix obtained from $B$ by removing the rows and columns with indices from $S$ (i.e. $B_{-S \times -S} := I_{-S}^T B I_{-S}$); similarly, for a vector $v \in \R^n$, by $v_{-S}$ we denote the subvector of size $n-\tau$ obtained from $v$ by removing the elements with indices from $S$ (i.e. $v_{-S} := I_{-S}^T v$); for brevity, for each $1 \leq i \leq n$, we also use $I_{-i}$, $B_{-i \times -i}$ and $v_{-i}$ instead of more cumbersome $I_{-\{i\}}$, $B_{-\{i\} \times -\{i\}}$ and $v_{-\{i\}}$ respectively.

To prove Lemma~\ref{lemma:sum_of_adjugates_of_principal_submatrices}, let us consider the matrix-valued polynomial
\begin{equation}\label{eq:matrix_valued_polynomial}
t \in \R \mapsto P(t) := \Adj(B - t I)
\end{equation}
and show that the left- and right-hand sides of \eqref{eq:sum_of_adjugates_of_principal_submatrices} are, up to a constant multiplicative factor, different representations of the $(n-\tau)$-th derivative of $P$ at zero.

We start with the easier right-hand side. Using the spectral decomposition $B = Q \Diag(\lambda) Q^T$ and the definition of the adjugate matrix, for each $t \in \R$ we readily obtain the following spectral decomposition of $P$:
$$
P(t) = Q \Diag(d_1(t), \dots, d_n(t)) Q^T,
$$
where $d_i : \R \to \R$ for each $1 \leq i \leq n$ is the polynomial
$$
d_i(t) := \prod_{1 \leq j \leq n : j \neq i} (\lambda_j(B) - t).
$$
Opening the parentheses and grouping the terms by the powers of $t$, we see that
$$
d_i^{(n-\tau)}(0) = (-1)^{n-\tau} (n-\tau)! \sigma_{\tau-1}(\lambda_{-i})
$$
for each $1 \leq i \leq n$, and hence
\begin{equation}\label{eq:first_expression_for_derivative}
P^{(n-\tau)}(0) = (-1)^{n-\tau} (n-\tau)! Q \Diag(\sigma_{\tau-1}(\lambda_{-1}), \dots, \sigma_{\tau-1}(\lambda_{-n})) Q^T.
\end{equation}

Now we take another approach to calculating the derivative $P^{(n-\tau)}(0)$ by directly differentiating the original expression \eqref{eq:matrix_valued_polynomial}. The key inductive step here is
\begin{lemma}[Inductive step]\label{lemma:inductive_step}
Let $B$ be an $n \times n$ ($n \geq 2$) real symmetric matrix, let $P$ be the matrix-valued polynomial \eqref{eq:matrix_valued_polynomial}, and let $t \in \R$. Then
\begin{equation}\label{eq:inductive_step}
P'(t) = -\sum_{i=1}^n I_{-i} \Adj(B_{-i \times -i} - t I) I_{-i}^T.
\end{equation}
\end{lemma}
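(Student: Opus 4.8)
The plan is to compute the derivative of the matrix polynomial $P(t) = \Adj(B - tI)$ directly, using the Jacobi formula together with the cofactor expansion of the determinant. Recall that for any square matrix $A(t)$ depending smoothly on $t$ with $\frac{d}{dt} A(t) = -I$ (which is our case, with $A(t) = B - tI$), the derivative of the adjugate can be related to the derivative of the determinant. The cleanest route is via the identity $\frac{d}{dt}\det(B - tI) = -\Tr(\Adj(B - tI)) = -\sum_{i=1}^n \det\big((B - tI)_{-i \times -i}\big)$, which is exactly the statement of Lemma~\ref{lemma:sum_of_principal_minors}-type cofactor expansion for the diagonal entries. But we need the full matrix $P'(t)$, not just its trace, so I would work entrywise.

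First I would fix indices $1 \leq p, q \leq n$ and write the $(p,q)$ entry of $P(t)$ as the $(q,p)$ cofactor of $B - tI$, i.e. $P(t)_{pq} = (-1)^{p+q} \det\big((B - tI)_{\hat q \times \hat p}\big)$, where $(B - tI)_{\hat q \times \hat p}$ is the $(n-1) \times (n-1)$ matrix obtained by deleting row $q$ and column $p$. Differentiating this determinant in $t$: since $\frac{d}{dt}(B - tI) = -I$, only the diagonal entries of the deleted minor contribute, so $\frac{d}{dt}\det\big((B - tI)_{\hat q \times \hat p}\big)$ equals $-\sum_{i \notin \{p,q\}}$ of the appropriate sub-sub-determinant obtained by further deleting the row and column indexed by $i$. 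Reassembling these sub-sub-cofactors, over all $i \notin \{p, q\}$, into the matrix $\sum_{i=1}^n I_{-i}\Adj\big((B - tI)_{-i \times -i}\big) I_{-i}^T$ is a bookkeeping exercise: for a given $i$, the matrix $I_{-i}\Adj\big((B-tI)_{-i\times -i}\big)I_{-i}^T$ has zero $p$-th row and $p$-th column whenever $p = i$, and otherwise its $(p,q)$ entry is precisely the cofactor of $(B - tI)_{-i\times -i}$ at the position corresponding to $(p,q)$, which is the sub-sub-cofactor just described. The signs work out because deleting two rows and two columns flips the sign by a product of two factors $(-1)^{\text{something}}$ whose combined parity matches $(-1)^{p+q}$ times the parity inside the $(n-1)$-dimensional cofactor.

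The main obstacle I expect is the sign bookkeeping: tracking how the sign $(-1)^{p+q}$ in the definition of the cofactor interacts with the shifts in row/column indices that occur when one passes from the $n \times n$ matrix to the $(n-1)\times(n-1)$ deleted minor and then to the $(n-2)\times(n-2)$ further-deleted minor. A clean way to sidestep much of this is to first establish the identity for $t$ such that $B - tI$ is invertible (a dense set), where $\Adj(A) = \det(A) A^{-1}$ and one can use the smooth formulas $\frac{d}{dt}\det(A) = \det(A)\Tr(A^{-1})$ and $\frac{d}{dt}A^{-1} = A^{-1}A^{-1}$ (again using $\dot A = -I$), giving $P'(t) = \det(A)\big[\Tr(A^{-1})A^{-1} - A^{-1}A^{-1}\big]$; then one checks that $\det(A)\big[(A^{-1})_{pp} + (A^{-1})_{qq} - 2(A^{-1})_{pq}\delta_{pq}\big]$-type combinations reproduce the right-hand side via the identity $\det(A)\big[(A^{-1})_{pp}(A^{-1})_{qq} - (A^{-1})_{pq}(A^{-1})_{qp}\big] = \det(A_{-\{p,q\}\times -\{p,q\}})$ (Jacobi's complementary-minor identity). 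Since both sides of \eqref{eq:inductive_step} are polynomials in $t$, the identity on the dense invertible set extends to all $t \in \R$ by continuity. I would present the invertible-case computation as the main argument and remark that the general case follows by density, which avoids the worst of the combinatorial sign chase.
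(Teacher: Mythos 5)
Your second, ``cleaner'' route is essentially the paper's own argument: on the set of $t$ where $A := B - tI$ is invertible, write $P(t) = \Det(A)\,A^{-1}$ by Cramer's rule, differentiate, verify the resulting identity term by term, and handle the degenerate values of $t$ by a limiting argument (the paper reduces to $t=0$ and perturbs $B$ by $\delta I$; your observation that both sides of \eqref{eq:inductive_step} are polynomials in $t$, so that agreement on the cofinite set of invertible $t$ suffices, is an equally valid and slightly slicker regularization). The only substantive difference is the last step: writing $C := A^{-1}$ with columns $c_i$, the paper proves $\Det(A)\bigl(C_{ii}\,C_{-i \times -i} - (c_i)_{-i}(c_i)_{-i}^T\bigr) = \Adj(A_{-i \times -i})$ by permuting $i$ to the last position and using the block-inversion and block-determinant formulas, whereas you invoke Jacobi's complementary-minor identity as a known fact (which also spares you the paper's extra assumption that the $(n-1)\times(n-1)$ principal submatrices are invertible). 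Two caveats, though. First, a sign slip: with $\dot A = -I$, Jacobi's formula gives $\frac{d}{dt}\Det(A) = -\Det(A)\Tr(A^{-1})$, so $P'(t) = -\Det(A)\bigl[\Tr(A^{-1})A^{-1} - A^{-2}\bigr]$; your displayed expression is the negative of this and, carried through, would yield \eqref{eq:inductive_step} with the wrong sign. Second, the identity you quote, $\Det(A)\bigl[(A^{-1})_{pp}(A^{-1})_{qq} - (A^{-1})_{pq}(A^{-1})_{qp}\bigr] = \Det(A_{-\{p,q\} \times -\{p,q\}})$, is only the principal-minor case; matching the \emph{off-diagonal} entries of $\Adj(A_{-i \times -i})$ requires the general signed version, namely $\Det(A)\bigl[C_{ii}C_{pq} - C_{pi}C_{iq}\bigr] = \bigl(\Adj(A_{-i \times -i})\bigr)_{pq}$ for $p, q \neq i$, so a residue of the sign bookkeeping you hoped to avoid does remain (it is standard, and the paper's block computation is one self-contained way to dispose of it).
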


Suppose for the moment that Lemma~\ref{lemma:inductive_step} holds, and let $t \in \R$ be arbitrary. Differentiating both sides of \eqref{eq:inductive_step} (each time applying Lemma~\ref{lemma:inductive_step} to the matrix $B_{-i \times -i}$) and assuming that $n \geq 3$, we obtain
$$
\begin{aligned}
P''(t) &= \sum_{i=1}^n \sum_{1 \leq j \leq n : j \neq i} I_{-\{i, j\}} \Adj(B_{-\{i, j\} \times -\{i, j\}} - t I) I_{-\{i, j\}}^T \\
&= 2 \sum_{1 \leq i < j \leq n} I_{-\{i, j\}} \Adj(B_{-\{i, j\} \times -\{i, j\}} - t I) I_{-\{i, j\}}^T
\end{aligned}
$$
Similarly, assuming that $n \geq 4$, we have
$$
\begin{aligned}
P'''(t) &= -2 \sum_{1 \leq i < j \leq n} \sum_{1 \leq k \leq n : k \neq i, j} I_{-\{i, j, k\}} \Adj(B_{-\{i, j, k\} \times -\{i, j, k\}} - t I) I_{-\{i, j, k\}}^T \\
&= -6 \sum_{1 \leq i < j < k \leq n} I_{-\{i, j, k\}} \Adj(B_{-\{i, j, k\} \times -\{i, j, k\}} - t I) I_{-\{i, j, k\}}^T,
\end{aligned}
$$
and, more generally (by induction on $r$), that
$$
P^{(r)}(t) = (-1)^r r! \sum_{S \in {[n] \choose r}} I_{-S} \Adj(B_{-S \times -S} - t I) I_{-S}^T
$$
for all $0 \leq r \leq n - 1$. In particular,
\begin{equation}\label{eq:second_expression_for_derivative}
P^{(n-\tau)}(0) = (-1)^{n-\tau} (n-\tau)! \sum_{S \in {[n] \choose \tau}} I_S \Adj(B_{S \times S}) I_S^T.
\end{equation}
Equating \eqref{eq:first_expression_for_derivative} and \eqref{eq:second_expression_for_derivative}, we obtain the claim of Lemma~\ref{lemma:sum_of_adjugates_of_principal_submatrices}.

All that remains is to prove Lemma~\ref{lemma:inductive_step}.
\begin{proof}
We begin with a couple of technical simplifications. First, it suffices to prove the claim only for $t = 0$; the general case then follows by replacing $B$ with $B - t I$. Second, we can assume that $B$ and all its $(n-1) \times (n-1)$ principal submatrices ($B_{-1, -1}, \dots, B_{-n, -n}$) are simultaneously non-degenerate; otherwise we can replace $B$ with $B + \delta I$ for an arbitrary sufficiently small $\delta > 0$ (such that $B + \delta I$ satisfies the above requirement) and then pass to the limit as $\delta \to 0$ using the continuity of the both sides of \eqref{eq:inductive_step} in $\delta$.

Since $B$ is non-degenerate, there exists a sufficiently small neighborhood around zero such that $B - t I$ is non-degenerate for all $t$ from this neighborhood and
$$
P(t) = \Det(B - t I) (B - t I)^{-1}
$$
by Cramer's rule. Differentiating and denoting $C := B^{-1}$, we obtain
$$
P'(0) = -\Det(B) (\Tr(B^{-1}) B^{-1} - B^{-2}) = -\Det(B) \sum_{i=1}^n ( C_{ii} C - c_i c_i^T ),
$$
where $c_1, \dots, c_n \in \R^n$ are the columns of $C$. Observe that the $i$-th row and the $i$-th column of the matrix $C_{ii} C - c_i c_i^T$ for each $1 \leq i \leq n$ consist entirely of zeros, so
$$
P'(0) = -\Det(B) \sum_{i=1}^n I_{-i} ( C_{ii} C_{-i \times -i} - (c_i)_{-i} (c_i)_{-i}^T ) I_{-i}^T.
$$

Thus, to obtain the claim, it suffices to demonstrate that
$$
\Det(B) ( C_{ii} C_{-i \times -i} - (c_i)_{-i} (c_i)_{-i}^T ) = \Adj(B_{-i \times -i})
$$
for all $1 \leq i \leq n$. Replacing $B$ with $P^T B P$ if necessary (where $P$ is the $n \times n$ permutation matrix obtained from the identity matrix by moving the $i$-th column into the end), it is enough to consider only the case $i = n$. Let
$$
B = \left( \begin{array}{cc} F & z \\ z^T & \alpha \end{array} \right),
$$
where $F$ is the top-left $(n-1) \times (n-1)$ principal submatrix of $B$, $z \in \R^{n-1}$ is the right-most column of $B$ with the last element removed, and $\alpha := B_{nn}$ is the element in the lower right corner. Note that $F$ is non-degenerate as a principal $(n-1) \times (n-1)$ submatrix of $B$ (by the technical assumption made at the very beginning). Using the formula for inverting a block matrix, we obtain that $\alpha - \langle F^{-1} z, z \rangle \neq 0$, and
$$
C = B^{-1} = \left( \begin{array}{cc} F^{-1} + \frac{ F^{-1} z z^T F^{-1} }{ \alpha - \langle F^{-1} z, z \rangle } & -\frac{F^{-1} z} { \alpha - \langle F^{-1} z, z \rangle } \\ -\frac{ z^T F^{-1} }{ \alpha - \langle F^{-1} z, z \rangle } & \frac{1}{ \alpha - \langle F^{-1} z, z \rangle } \end{array} \right).
$$
In particular, we see that
$$
C_{nn} C_{-n \times -n} - (c_n)_{-n} (c_n)_{-n}^T = \frac{ F^{-1} }{ \alpha - \langle F^{-1} z, z \rangle }.
$$
Since by Cramer's rule
$$
\Adj(B_{-n \times -n}) = \Adj(F) = \Det(F) F^{-1},
$$
it remains to check whether
$$
\Det(B) = \Det(F) ( \alpha - \langle F^{-1} z, z \rangle ).
$$
But this is exactly the formula for the determinant of a block matrix.
\end{proof}

\section{Implementation of volume sampling}\label{section:implementation_of_volume_sampling}

Let $B$ be an $n \times n$ real symmetric positive semidefinite matrix, and let $1 \leq \tau \leq \Rank(B)$. In this section, we discuss how to generate a random variable $S_0 \sim \Vol_{\tau}(B)$ according to volume sampling.

\subsection{General algorithm}\label{section:general_algorithm}

Recall that volume sampling is sampling with a finite number of outcomes. Thus, in principle, $S_0$ can be generated by any general method for generating random variables taking a finite number of values. Let us briefly review one such method which is based on the following result:
\begin{proposition}[Generating a random variable taking a finite number of values]\label{proposition:generating_discrete_random_variable}
Let $X := \{x_1, \dots, x_N\}$ be a finite set, and let $p_1, \dots, p_N$ be non-negative numbers such that $\sum_{k=1}^N p_k = 1$. For each $1 \leq k \leq N$, let $P_k := \sum_{k'=1}^k p_{k'}$. Let $u$ be a random variable uniformly distributed on the interval $(0, 1)$, and let $\xi := x_{k_0}$, where $k_0 := \min\{ 1 \leq k \leq N : u \leq P_k \}$. Then $\xi$ is a well-defined random variable taking values in $X$ such that $\Pr(\xi = x_k) = p_k$ for all $1 \leq k \leq N$.
\end{proposition}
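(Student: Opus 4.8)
The plan is to verify three things in order: that $k_0$ is well-defined (the set over which we take the minimum is nonempty for every realization of $u$), that $\xi$ is therefore a genuine random variable taking values in $X$, and that its law is given by the $p_k$'s. For the first point, note that $P_N = \sum_{k'=1}^N p_{k'} = 1$, so for any $u \in (0,1)$ we have $u \le 1 = P_N$, hence $N$ itself always belongs to the set $\{1 \le k \le N : u \le P_k\}$; being a nonempty subset of $[N]$, it has a minimum, and $k_0$ is well-defined as a function of $u$. Since $u$ ranges over $(0,1)$, $k_0$ is a measurable function of $u$ (it is piecewise constant, jumping only at the finitely many points $P_1, \dots, P_{N}$), so $\xi = x_{k_0}$ is a well-defined random variable with values in $X$.

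The second and main step is the identification of the distribution. Fix $1 \le k \le N$. I would show that the event $\{k_0 = k\}$ coincides, up to the choice of endpoints, with $\{u \in (P_{k-1}, P_k]\}$, where we set $P_0 := 0$. Indeed, by definition $k_0 = k$ means $u \le P_k$ but $u > P_j$ for all $j < k$; since $P_1 \le P_2 \le \dots \le P_{k-1}$ (the $p_{k'}$ are non-negative, so the partial sums are nondecreasing), the condition $u > P_j$ for all $j<k$ is equivalent to the single condition $u > P_{k-1}$. Hence $\{k_0 = k\} = \{P_{k-1} < u \le P_k\}$. Since $u$ is uniform on $(0,1)$, the probability of this event is the length of the interval $(P_{k-1}, P_k] \cap (0,1)$, which is $P_k - P_{k-1} = p_k$ (the interval is contained in $[0,1]$ since $0 = P_0 \le P_1 \le \dots \le P_N = 1$). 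Therefore $\Pr(\xi = x_k) = \Pr(k_0 = k) = p_k$.

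A small point to dispatch at the end: one must observe that $\Pr(\xi = x_k) = p_k$ for each $k$ is consistent, i.e. the events $\{k_0 = k\}$, $k = 1, \dots, N$, partition the sample space up to a null set, which follows since the intervals $(P_{k-1}, P_k]$ cover $(0,1]$ and hence cover $(0,1)$ up to the single point $0$ (of probability zero), and are pairwise disjoint; summing gives $\sum_k p_k = 1$, as required. If some $p_k = 0$, then the corresponding interval is empty and $\Pr(\xi = x_k) = 0$, which is still correct. I do not expect any real obstacle here; the only mild subtlety is bookkeeping with the half-open intervals and the convention $P_0 = 0$, together with noting that the choice of $\le$ versus $<$ at the endpoints is immaterial because $u$ almost surely avoids the finite set $\{P_1, \dots, P_N\}$.
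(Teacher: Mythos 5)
Your argument is correct and follows essentially the same route as the paper's proof: introduce $P_0 := 0$, use monotonicity of the partial sums to identify $\{k_0 = k\}$ with the event $\{P_{k-1} < u \leq P_k\}$, and read off $\Pr(\xi = x_k) = P_k - P_{k-1} = p_k$ since these intervals partition $(0,1]$. The extra remarks on measurability and on zero-probability endpoints are fine but not needed beyond what the paper records.
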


\begin{proof}
For convenience, denote $P_0 := 0$. Observe that $0 = P_0 \leq P_1 \leq \dots \leq P_N = 1$. Thus, $k_0 = k$ for some $1 \leq k \leq N$ iff $u$ belongs to the interval $(P_{k-1}, P_k]$. Since these intervals are disjoint and their union is $(0, 1]$, the variable $k_0$ is well-defined. Hence, $\xi$ is well-defined, and $\Pr(\xi = x_k) = \Pr(k_0 = k) = \Pr(P_{k-1} < u \leq P_k) = P_k - P_{k-1} = p_k$ for each $1 \leq k \leq N$.
\end{proof}

Proposition~\ref{proposition:generating_discrete_random_variable} is in fact a two-stage algorithm for generating a random variable $\xi$ taking values in $\{x_1, \dots, x_N\}$ given the corresponding list of probabilities $p_1, \dots, p_N$. At the first stage of this algorithm (called \emph{preprocessing}), we compute the cumulative sums $P_1, \dots, P_N$. This requires $O(N)$ operations. At the second stage (called \emph{sampling}), we first generate a random variable $u$ uniformly distributed on $(0, 1)$, then compute $k_0$, and finally output $\xi := x_{k_0}$. Since the cumulative sums $(P_k)_{1 \leq k \leq N}$ are monotonically increasing, one can use binary search for efficiently finding $k_0$. Thus, the complexity of sampling is just $O(\ln N)$ operations. Note that the preprocessing has to be done only once; after this, one can generate arbitrarily many independent samples using the sampling routine.

In the case of volume sampling, the above procedure looks as follows. At the preprocessing stage, we iterate over all $N = {n \choose \tau}$ possible $\tau$-element subsets of $[n]$, computing the corresponding principal minors of $B$ and corresponding cumulative sums. This requires $O({n \choose \tau} \tau^3)$ operations in total assuming that the complexity of calculating a minor of size $\tau$ is $O(\tau^3)$. During sampling, we use binary search to find the number $k_0$, and then return the set $S_0 \in {[n] \choose \tau}$ corresponding to $k_0$. Each sampling thus requires $O(\ln{n \choose \tau})$ operations.

Unfortunately, the above $O({n \choose \tau} \tau^3)$ preprocessing time makes the general algorithm impractical for most values of $\tau$. Nevertheless, it is still applicable for several very small values of $\tau$. For example, when $\tau=1$, the preprocessing time and memory complexities are both $O(n)$, while the sampling time and memory complexities are $O(\ln n)$ and $O(1)$ respectively. Another interesting regime is $\tau=2$. In this case, the preprocessing time and memory complexities are both $O(n^2)$, while the sampling time and memory complexities are the same as before. Note that in many applications the objective function $f : \R^n \to \R$ has the form $f(x) := \phi(A x, x)$, where $A$ is a real $m \times n$ matrix, and $\phi : \R^m \times \R^n \to \R$ is a function that can be computed in time $O(m + n)$ (see Section~\ref{section:examples_of_applications} for different examples). In these applications, the $O(n^2)$ memory is comparable to the cost of storing $A$, and the $O(n^2)$ time is comparable to the cost of one computation of the objective and is often allowable (see also Section~\ref{section:sparse_two_element_volume_sampling} for a possible treatment of sparsity).

While the general procedure described above is not polynomial, we note that there are more specialized methods for volume sampling that are polynomial (e.g. see \cite{deshpande2010efficient} for an exact algorithm and several efficient approximate ones). However, they are designed for generating just one sample and therefore are not directly suited for using them inside optimization algorithms, where one needs a very fast sampling routine that can be called at each iteration. Perhaps, it is possible to modify these methods by properly splitting them into a polynomial preprocessing stage and an independent sampling stage which is much faster, but we have not investigated this direction.

\subsection{Two-element volume sampling for sparse matrices}\label{section:sparse_two_element_volume_sampling}

Now suppose that the matrix $B$ is sparse and our goal is to implement 2-element volume sampling. The general algorithm described above requires $O(n^2)$ time and $O(n^2)$ memory for preprocessing, which may be too expensive when $n$ is large. In this section, we present a special method that takes into account the sparsity of $B$ and whose preprocessing time and memory complexities are both $O(\nnz(B) + n)$, where $\nnz(B)$ is the number of non-zero elements of $B$. When $B$ is dense, $\nnz(B) = n^2$, but it can be much smaller than $n^2$ if $B$ is sufficiently sparse. Once the preprocessing is done, each sampling then has the $O(\ln n)$ time complexity and the $O(1)$ memory complexity, which are exactly the same complexities as those of the general algorithm from the previous section (see Figure~\ref{fig:sampling_complexity} for a comparison).

\begin{figure}\centering
\begin{tabular}{l|l|l|l|l}
& \multicolumn{2}{c|}{Preprocessing} & \multicolumn{2}{c}{Sampling} \\ \hline
& Time & Memory & Time & Memory \\ \hline
Dense & $O(n^2)$ & $O(n^2)$ & $O(\ln n)$ & $O(1)$ \\ \hline
Sparse & $O(\nnz(B) + n)$ & $O(\nnz(B) + n)$ & $O(\ln n)$ & $O(1)$
\end{tabular}
\caption{\label{fig:sampling_complexity}Time and memory complexities of 2-element volume sampling for a dense and sparse matrix.}
\end{figure}

Assume that the matrix $B$ is given in the \emph{CSR (Compressed Sparse Row) format}\footnote{Strictly speaking, the classical CSR format is different from the one we are describing here. In the classical CSR format, the index vectors $j^{(1)}, \dots, j^{(n)}$ are concatenated into one large vector, similarly the value vectors $v^{(1)}, \dots, v^{(n)}$ are concatenated into one large vector, and instead the numbers $r_1, \dots, r_n$ one stores their cumulative sums. Nevertheless, the format we are describing is algorithmically equivalent to the original CSR format and one can be transformed into the other in a straightforward manner without any time or memory overhead.}: for each $1 \leq i \leq n$, we know the $r_i$ indices (possibly zero) $i \leq j^{(i)}_1 < \dots < j^{(i)}_{r_i} \leq n$ of all non-zero elements in the $i$-th row of $B$ which are located to the right of the diagonal (thus, $\{ j^{(i)}_1, \dots, j^{(i)}_{r_i} \} = \{ i \leq j \leq n : B_{ij} \neq 0 \}$), as well as the corresponding values $v^{(i)}_1, \dots, v^{(i)}_{r_i}$ of these elements (thus, $v^{(i)}_k := B_{i j^{(i)}_k}$ for all $1 \leq k \leq r_i$). For notational convenience, for each $1 \leq i < j \leq n; 1 \leq k \leq r_i$ we also define $j^{(i)}_{r_i+1} := n + 1$ and
$$
P_{v, h, t}(i, j, k) := v^{(i)}_1 (t_i - t_{j+1}) - h^{(i)}_k,
$$
where $h^{(1)} \in \R^{r_1}, \dots, h^{(n-1)} \in \R^{r_{n-1}}$, $t \in \R^{n+1}$ are given, and $h := (h^{(1)}, \dots, h^{(n-1)})$.

\begin{algorithm}
\caption{\label{algorithm:sparsetwovspreprocess}$\sparsetwovspreprocess(B)$}
\begin{algorithmic}[1]
\REQUIRE $B$: $n \times n$ real symmetric positive semidefinite matrix of rank at least two specified by its CSR-format $(r_1, \dots, r_n, j^{(1)}, \dots, j^{(n)}, v^{(1)}, \dots, v^{(n)})$.
\STATE For each $1 \leq i \leq n-1$, compute $h^{(i)} \in \R^{r_i}$, defined by $h^{(i)}_k := \sum_{k'=1}^k (v^{(i)}_{k'})^2$.
\STATE Compute $t \in \R^{n+1}$, defined by $t_i := \sum_{j=i}^n v^{(j)}_1$ (and $t_{n+1} := 0$).
\STATE Compute $q \in \R^{n-1}$, defined by $q_i := \sum_{i'=1}^i P_{v, h, t}(i', n, r_{i'})$.
\RETURN $(h^{(1)}, \dots, h^{(n-1)}, t, q)$
\end{algorithmic}
\end{algorithm}

\begin{algorithm}
\caption{\label{algorithm:sparsetwovssample}$\sparsetwovssample(B, h^{(1)}, \dots, h^{(n-1)}, t, q)$}
\begin{algorithmic}[1]
\REQUIRE $B$: $n \times n$ real symmetric positive semidefinite matrix of rank at least two specified by its CSR-format $(r_1, \dots, r_n, j^{(1)}, \dots, j^{(n)}, v^{(1)}, \dots, v^{(n)})$; $h^{(1)} \in \R^{r_1}, \dots, h^{(n-1)} \in \R^{r_{n-1}}$; $t \in \R^{n+1}$; $q \in \R^{n-1}$.
\STATE Independently generate random $u_1, u_2$ uniformly distributed on $(0, 1)$.
\STATE Find $i_0 := \min\{ 1 \leq i \leq n - 1 : u_1 \leq \frac{q_i}{q_{n-1}} \}$ using binary search.
\STATE Find $k_l := \min\{ 1 \leq k \leq r_{i_0} : u_2 \leq \frac{P_{v, h, t}(i_0, j^{(i_0)}_{k+1} - 1, k)}{P_{v, h, t}(i_0, n, r_{i_0})} \}$ using binary search.
\STATE Find $j_0 := \min\{ j^{(i_0)}_{k_l} \leq j \leq j^{(i_0)}_{k_l + 1} - 1 : u_2 \leq \frac{P_{v, h, t}(i_0, j, k_l)}{P_{v, h, t}(i_0, n, r_{i_0})} \}$ using binary search.
\RETURN $\{ i_0, j_0 \}$
\end{algorithmic}
\end{algorithm}

Now the notation has been established and we are ready to present the algorithm. Similarly to the method from the previous section, it is a two-stage procedure that consists of an expensive preprocessing stage (Algorithm~\ref{algorithm:sparsetwovspreprocess}) and a cheap sampling stage (Algorithm~\ref{algorithm:sparsetwovssample}) that can be executed as many times as one wishes once the preprocessing has terminated.

Clearly, the time and memory complexities of the preprocessing stage are both $O(\nnz(B) + n)$. The sampling stage consists of three successive binary searches over certain subsets of $[n]$ and thus has the $O(\ln n)$ time complexity and the $O(1)$ memory complexity when properly implemented (the function $P_{v, h, t}$ should be computed on the fly inside each binary search).

We now prove the correctness of the algorithm.

\begin{theorem}
Let $B$ be an $n \times n$ real symmetric positive semidefinite matrix with rank at least two. Let $(h^{(1)}, \dots, h^{(n-1)}, t, q) := \sparsetwovspreprocess(B)$, and let $S_0 := \sparsetwovssample(B, h^{(1)}, \dots, h^{(n-1)}, t, q)$. Then $S_0$ is a well-defined random variable distributed according to $\Vol_2(B)$.
\end{theorem}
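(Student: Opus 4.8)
The plan is to recognize the three successive binary searches performed by Algorithm~\ref{algorithm:sparsetwovssample} as three applications of Proposition~\ref{proposition:generating_discrete_random_variable}. Write $b_i := B_{ii}$ and $\Delta_{ij} := \Det(B_{\{i,j\}\times\{i,j\}}) = b_i b_j - B_{ij}^2$ for $i<j$; since $B$ is positive semidefinite, every principal $2\times 2$ submatrix is positive semidefinite, so $\Delta_{ij}\geq 0$. By the definition of $\Vol_2(B)$ together with Lemma~\ref{lemma:sum_of_principal_minors} applied with $\tau=2$, the goal reduces to proving that $\Pr(S_0 = \{i,j\}) = \Delta_{ij}/\sigma_2(\lambda)$ for every $i<j$, where $\sigma_2(\lambda) = \sum_{i'<j'}\Delta_{i'j'}$. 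First I would dispose of degeneracies: $\Rank(B)\geq 2$ forces $\sigma_2(\lambda)>0$, and one may assume $b_i>0$ for all $i$ — if $b_i = 0$ then positive semidefiniteness makes the whole $i$-th row and column vanish, whence $\Delta_{il}=0$ for all $l$ and the algorithm assigns probability zero to any pair through $i$, so such coordinates can be discarded. In particular $j^{(i)}_1 = i$ and $v^{(i)}_1 = b_i$ for every $i$.

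The one computation I would carry out carefully is the algebraic identity underpinning the construction: for $i<n$, $i\leq j\leq n$, and $k$ the number of stored column indices of row $i$ that are $\leq j$ (equivalently $j^{(i)}_k \leq j \leq j^{(i)}_{k+1}-1$),
$$
P_{v,h,t}(i,j,k) \;=\; W_i(j) \;:=\; \sum_{l=i+1}^{j}\Delta_{il}.
$$
This follows by substituting $t_i - t_{j+1} = \sum_{l=i}^j b_l$ and $h^{(i)}_k = b_i^2 + \sum_{l=i+1}^{j} B_{il}^2$ (the latter using the choice of $k$, which pins down exactly which off-diagonal entries of row $i$ have been accumulated) into the definition of $P_{v,h,t}$ and using $v^{(i)}_1 = b_i$. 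From this identity I would record three facts for repeated use: $P_{v,h,t}(i,n,r_i) = W_i(n) = \sum_{l>i}\Delta_{il}$; the preprocessing array satisfies $q_i = \sum_{i'\leq i} W_{i'}(n)$, so $q_{n-1} = \sum_{i'<j'}\Delta_{i'j'} = \sigma_2(\lambda)>0$; and, since every $\Delta_{il}\geq 0$, for each fixed $i$ the map $j\mapsto W_i(j)$ is nondecreasing on $\{i,\dots,n\}$ with $W_i(i) = 0$.

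With this in hand the three searches fall into place. Step~2 returns $i_0 = \min\{i\leq n-1 : u_1 \leq q_i/q_{n-1}\}$, so applying Proposition~\ref{proposition:generating_discrete_random_variable} with the cumulative sums $q_i/q_{n-1}$ gives $\Pr(i_0 = i) = (q_i - q_{i-1})/q_{n-1} = W_i(n)/\sigma_2(\lambda)$, which is exactly $\Pr(\min S_0 = i)$ under volume sampling. Then I would fix $i$ with $W_i(n)>0$ and condition on $\{i_0 = i\}$; since $u_1$ and $u_2$ are independent and $i_0$ depends only on $u_1$, the variable $u_2$ remains uniform on $(0,1)$. Now view Steps~3--4 as a single search over $\{i+1,\dots,n\}$ with probabilities $\Delta_{ij}/W_i(n)$ and cumulative sums $W_i(j)/W_i(n)$: because the block-endpoint values $W_i(j^{(i)}_{k+1}-1)$ are nondecreasing in $k$ and equal $W_i(n)$ at $k=r_i$, Step~3 locates the unique block $[j^{(i)}_{k_l}, j^{(i)}_{k_l+1}-1]$ containing $j^\ast := \min\{i<j\leq n : u_2 \leq W_i(j)/W_i(n)\}$ (the slot $j=i$ in the first block being excluded a.s.\ since $u_2>0$ and $W_i(i)=0$), and Step~4, using the displayed identity and monotonicity, returns $j_0 = j^\ast$. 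Proposition~\ref{proposition:generating_discrete_random_variable} then yields $\Pr(j_0 = j\mid i_0 = i) = \Delta_{ij}/W_i(n)$, and multiplying the two gives $\Pr(S_0 = \{i,j\}) = \Delta_{ij}/\sigma_2(\lambda)$; finally $i_0\leq n-1$ and $j_0\geq i_0+1$, so $S_0 = \{i_0,j_0\}$ is a genuine $2$-element subset of $[n]$.

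The main obstacle I anticipate is exactly the bookkeeping in Steps~3--4: one must verify that the two-level binary search — first over the ``blocks'' delimited by the nonzero positions $j^{(i)}_k$, then within the chosen block — is genuinely equivalent to one cumulative search over all $j\in\{i+1,\dots,n\}$. This hinges on the monotonicity of $W_i(\cdot)$ and on the identity matching $P_{v,h,t}(i, j^{(i)}_{k+1}-1, k)$ with the block-endpoint partial sums $W_i(j^{(i)}_{k+1}-1)$; the remaining care is with boundary cases (the slot $j=i$ inside the first block, rows with very few stored entries, and, unless excluded at the outset, zero diagonal entries).
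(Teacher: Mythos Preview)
Your proposal is correct and follows essentially the same route as the paper: your $W_i(j)$ is precisely the paper's $P(i,j)$, the key identity $P_{v,h,t}(i,j,k)=W_i(j)$ for $j^{(i)}_k\le j\le j^{(i)}_{k+1}-1$ is the same, and the two applications of Proposition~\ref{proposition:generating_discrete_random_variable} (first for $i_0$, then for $j_0$ after reducing the two-level search to a single cumulative search) mirror the paper's argument exactly. Your explicit handling of zero diagonal entries and of the independence of $u_1,u_2$ under conditioning is slightly more careful than the paper's presentation, but the substance is identical.
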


\begin{proof}
Observe that, for each $1 \leq i \leq n - 1$ and each $1 \leq k \leq r_i$, we have
$$
h^{(i)}_k = \sum_{j=i}^{j^{(i)}_k} B_{ij}^2, \qquad h^{(i)}_{r_i} = \sum_{j=i}^n B_{ij}^2, \qquad v^{(i)}_1 = B_{ii}, \qquad t_i = \sum_{j=i}^n B_{jj}.
$$
For each $1 \leq i < j \leq n$, denote
$$
P(i, j) := \sum_{j'=i+1}^j \Det(B_{ \{i,j\} \times \{i,j\} })
$$
Then, for each $1 \leq i \leq n - 1$, it holds that
$$
v^{(i)}_1 t_i - h^{(i)}_{r_i} = \sum_{j=i}^n (B_{ii} B_{jj} - B_{ij}^2) = P(i, n).
$$
From Proposition~\ref{proposition:generating_discrete_random_variable}, it follows that $i_0$ is a well-defined random variable taking values in $\{1,\dots,n-1\}$ with probabilities
\begin{equation}\label{eq:sparse_22_volume_sampling_well_defined:proof:2}
\Pr(i_0 = i) = \frac{P(i, n)}{\sum_{i'=1}^{n-1} P(i', n)}.
\end{equation}

Next observe that, for each $1 \leq i \leq n - 1$ and $1 \leq k \leq r_i$, we have
$$
P(i, j, k) = B_{i i} \sum_{j'=i}^j B_{j' j'} - \sum_{j'=i}^{j_k^{(i)}} B_{i j'}^2 = \sum_{j'=i}^j (B_{ii} B_{j'j'} - B_{i j'}^2) = P(i, j)
$$
for all $j^{(i)}_k \leq j \leq j^{(i)}_{k+1} - 1$. In particular, $P(i_0, j^{(i_0)}_{k+1} - 1, k) = P(i_0, j^{(i_0)}_{k+1} - 1)$ for all $1 \leq k \leq r_{i_0}$, from which we see that $P(i_0, j^{(i_0)}_{k+1} - 1, k)$ monotonically increases with $k$ (since $j^{(i_0)}_{k+1}$ does so and $P(i_0, j)$ monotonically increases with $j$) until it reaches $P(i_0, n, r_{i_0}) = P(i_0, n)$ when $k = r_{i_0}$. This shows that $k_l$ is well-defined. Similarly, we can write $P(i_0, j, k_l) = P(i_0, j)$ for all $j^{(i_0)}_{k_l} \leq j \leq j^{(i_0)}_{k_l+1} - 1$, from which it follows that $P(i_0, j, k_l)$ monotonically increases with $j$. Combining this with the definition of $k_l$, which gives
$$
\frac{P(i_0, j^{(i_0)}_{k_l})}{P(i, n)} \leq u_2 \leq \frac{P(i_0, j^{(i_0)}_{k_l+1} - 1)}{P(i, n)},
$$
we conclude that $j_0$ is well-defined and in fact
$$
j_0 = \min\left\{ i_0 + 1 \leq j \leq n : u_2 \leq \frac{P(i_0, j)}{P(i_0, n)} \right\}.
$$

Applying Proposition~\ref{proposition:generating_discrete_random_variable} again, we obtain that, conditioned on $i_0 = i$, the random variable $j_0$ takes values in $\{ i + 1, \dots, n \}$ such that
$$
\Pr(j_0 = j \mid i_0 = i) = \frac{\Det(B_{\{i,j\} \times \{i,j\}})}{P(i, n)}.
$$
for all $1 \leq i < j \leq n$. Undoing the conditioning, we see that $(i_0, j_0)$ is a well-defined random variable taking values in $\{ (i, j) : 1 \leq i < j \leq n \}$ with probabilities
$$
\Pr(i_0 = i; j_0 = j) = \Pr(i_0 = i) \Pr(j_0 = j \mid i_0 = i) = \frac{\Det(B_{\{i, j\} \times \{i, j\}})}{\sum_{i'=1}^{n-1} P(i', n)},
$$
and the claim follows.
\end{proof}

\section{Examples of applications}\label{section:examples_of_applications}

Now we consider several examples of objective functions for which it is possible to apply the RCDVS method and discuss different implementation details.

\subsection{Quadratic function}\label{section:quadratic_function}

Our first example of an objective function is the convex quadratic $f : \R^n \to \R$, defined by
$$
f(x) := \frac{1}{2} \langle A x, x \rangle - \langle b, x \rangle,
$$
where $A$ is a given $n \times n$ real symmetric positive semidefinite matrix and $b \in \R^n$. This function is 1-smooth with respect to the seminorm $\| \cdot \|_A$, so one can minimize it by RCDVS with
$$
B := A.
$$
For doing volume sampling, one can either use the general algorithm from Section~\ref{section:general_algorithm} when the matrix $A$ is dense, or the special one from Section~\ref{section:sparse_two_element_volume_sampling} when $A$ is sparse.

\subsection{Separable problems}\label{section:separable_problems}

The second example gives rise to a whole family of objective functions $f : \R^n \to \R$ that are admissible for the RCDVS method and can be obtained by composing some smooth separable function with a linear mapping:
$$
f(x) := \sum_{i=1}^m g_i(\langle a_i, x \rangle).
$$
Here $a_1, \dots, a_m \in \R^n$ are given vectors and $g_1, \dots, g_m : \R \to \R$ are univariate functions such that $g_i$ is $L_i$-smooth $(L_i \geq 0)$ for each $1 \leq i \leq n$, meaning that it is differentiable and satisfies
$$
g_i(t) \leq g_i(t_0) + g_i'(t_0) (t - t_0) + \frac{L_i}{2} (t - t_0)^2
$$
for all $t, t_0 \in \R$. It is easy to see from the definitions that the resulting function $f$ turns out to be 1-smooth with respect to the seminorm $\| \cdot \|_B$, where
\begin{equation}\label{eq:matrix_for_separable_problems}
B := \sum_{i=1}^m L_i a_i a_i^T.
\end{equation}

\begin{example}[Least squares]
Let $f$ be the least squares function
$$
f(x) := \frac{1}{2} \sum_{i=1}^m (\langle a_i, x \rangle - b_i)^2,
$$
where $b_1, \dots, b_m \in \R$. In this case, $g_i(t) := \frac{1}{2} (t - b_i)^2$ is the quadratic function with $L_i = 1$ for each $1 \leq i \leq m$.
\end{example}

\begin{example}[Logistic regression]
Let $f$ be the logistic regression function
$$
\sum_{i=1}^m \ln(1 + e^{-b_i \langle a_i, x \rangle}),
$$
where $b_1, \dots, b_m \in \{ -1, 1 \}$. In this case, $g_i(t) := \ln(1 + e^{-b_i t})$ is the logistic function with $L_i = \frac{1}{4}$ for each $1 \leq i \leq m$.
\end{example}

The matrix $B$ can be computed in $O(m n^2)$ operations. If $n$ is sufficiently small, this can be done rather efficiently, and then one can apply RCDVS for several small values of $\tau$ (e.g. $\tau=2, 3$, etc.).

If $n$ is large, one can still use RCDVS provided that the vectors $a_1, \dots, a_m$ are \emph{sparse}. Indeed, observe that the number of non-zero elements in $B$ is bounded above by $\sum_{i=1}^m p_i$, where $p_i$ for each $1 \leq i \leq n$ denotes the number of non-zero elements in $a_i$. Furthermore, a sparse representation of $B$ (e.g. the commonly used sparse compressed row/column formats) can be obtained in
\begin{equation}\label{eq:compl_of_comp_B}
O\left( \sum_{i=1}^m p_i^2 + n \right)
\end{equation}
operations (possibly with some logarithmic terms when a further sorting of indices is needed). After this, one can use the efficient algorithm for sparse two-element volume sampling from Section~\ref{section:sparse_two_element_volume_sampling}, whose preprocessing complexity is the same as \eqref{eq:compl_of_comp_B}. For example, if for all $1 \leq i \leq n$, we have $p_i \leq p$, where $p$ is some sufficiently small integer, then both the computation of $B$ and the preprocessing procedure take
$$
O(m p^2 + n)
$$
operations, which is only linear in both $m$ and $n$.

\subsection{Smoothing technique}\label{section:smoothing_technique}

Another interesting and quite rich source of examples comes from the smoothing technique \cite{nesterov2005smooth,beck2012smoothing}, which we now briefly review. Let $g : \R^m \to \R$ be a convex function. By the Fenchel--Moreau theorem, we can write
$$
g(y) = \max_{s \in G_*} \{ \langle y, s \rangle - g^*(s) \}
$$
for all $y \in \R^m$, where $g^* : G_* \to \R$ is the Fenchel conjugate of $g$ with the effective domain $G_*$ (assume that $G_*$ is bounded). Let $\omega^* : \Omega_* \to \R$ be a distance generating function on $G_*$ with respect to the standard Euclidean norm $\| \cdot \|$ (i.e. a non-negative closed convex function with domain $\Omega_* \supseteq G_*$ that is 1-strongly convex on $G_*$ with respect to $\| \cdot \|$). Let $\mu > 0$, and let $g_{\mu} : \R^n \to \R$ be the function
$$
g_\mu(y) := \max_{s \in G_*} \{ \langle y, s \rangle - g^*(s) - \mu \omega^*(s) \}.
$$
It is known that $g_\mu$ satisfies $g_\mu(y) \leq g(y) \leq g_{\mu}(y) + \mu \max_{G_*} \omega^*$ for all $y \in \R^m$, and moreover it is $\frac{1}{\mu}$-smooth with respect to $\| \cdot \|$. Thus, $g_\mu$ can be seen as a smooth uniform approximation of $g$, where the parameter $\mu$ controls both the accuracy of approximation and its level of smoothness.

Now let $A$ be an $m \times n$ real matrix, let $b \in \R^n$, and define $f, f_{\mu} : \R^n \to \R$ by
$$
f(x) := g(A x - b), \qquad f_\mu(x) := g_{\mu}(A x - b).
$$
It is easy to see that $f_\mu(x) \leq f(x) \leq f_\mu(x) + \mu \max_{G_*} \omega^*$ for all $x \in \R^n$. Furthermore, the function $f_\mu$ is $\frac{1}{\mu}$-smooth with respect to the seminorm $\| \cdot \|_{A^T A}$. Thus, the problem of minimizing $f$ can be replaced with the problem of minimizing its smooth approximation $f_\mu$ for some carefully chosen value of $\mu$. This latter problem can be solved by RCDVS with
$$
B := \frac{1}{\mu} A^T A.
$$
This matrix has exactly the same structure as the one from \eqref{eq:matrix_for_separable_problems}.

\begin{example}
The function
$$
f_{\mu}(x) := \mu \ln\left( \sum_{i=1}^m e^{(A x - b)_i / \mu} \right) - \mu \ln m
$$
is obtained from $g(y) := \max\{ y_1, \dots, y_m \}$ using the negative entropy function $\omega^*(s) := \sum_{i=1}^m s_i \ln s_i$ with domain $\Omega_* := \{ s \in \R^m : \sum_{i=1}^m s_i = 1; \, s_1, \dots, s_m \geq 0 \}$.
\end{example}

\begin{example}\label{example:huber_function}
The function
$$
f_{\mu}(x) := \sum_{i=1}^m H_{\mu}((A x - b)_i),
$$
where $H_{\mu} : \R \to \R$ is the \emph{Huber function}
$$
H_{\mu}(t) := \begin{cases}
\frac{t^2}{2 \mu}, & \text{if $|t| \leq \mu$}, \\
|t| - \frac{\mu}{2}, & \text{otherwise},
\end{cases}
$$
is obtained from the $l^1$-norm $g(y) := \| y \|_1$ using the Euclidean distance generating function $\omega^*(s) := \frac{1}{2} \| s \|^2$ with domain $\Omega_* := \R^m$.
\end{example}

\begin{example}
The function
$$
f_{\mu}(x) := \sqrt{\| A x - b \|^2 + \mu^2} - \mu
$$
is obtained from $g(y) := \| y \|$ using the function $\omega^*(s) := 1 - \sqrt{1 - \| s \|^2}$ with domain $\Omega_* := \{ s \in \R^n : \| s \| \leq 1 \}$.
\end{example}

\subsection{Combinations of previous examples}\label{section:combinations}

Finally, one can take non-negative linear combinations of the already considered examples. Indeed, let $f_1, \dots, f_r : \R^n \to \R$ be functions, where $f_i$ for each $1 \leq i \leq r$ is 1-smooth with respect to an $n \times n$ real symmetric positive semidefinite matrix $B_i$, and let $\alpha_1, \dots, \alpha_r > 0$. Then the sum $f := \sum_{i=1}^r \alpha_i f_i$ is 1-smooth with respect to the seminorm $\| \cdot \|_B$ with $B := \sum_{i=1}^r \alpha_i B_i$.

\begin{example}\label{example:l2_regularized_logistic_regression}
Let $f$ be the \emph{$l^2$-regularized logistic regression function}
$$
f(x) := \sum_{i=1}^m \ln(1 + e^{-b_i \langle a_i, x \rangle}) + \frac{\gamma}{2} \| x \|^2,
$$
where $a_1, \dots, a_m \in \R^n$, $b_1, \dots, b_m \in \{ -1, 1 \}$, $\gamma > 0$. In this case,
$$
B := \frac{1}{4} \sum_{i=1}^m a_i a_i^T + \gamma I.
$$
\end{example}

\section{Numerical experiments}\label{section:numerical_experiments}

In this section, we investigate the practical behavior of RCDVS and compare it with that of a couple of other already known methods.

The first one is the RCD method. Recall that it is in fact the same method as RCDVS with $\tau=1$. In comparing RCDVS with RCD, we are interested in investigating how the \emph{actual} acceleration ratio of RCDVS corresponds to our theoretical prediction (see \eqref{eq:theoretical_acceleration_ratio})
\begin{equation}\label{eq:acceleration_ratio_over_RCD}
R_{\lambda}(1, \tau) = \frac{\sum_{i=1}^n \lambda_i}{\sum_{i=\tau}^n \lambda_i}.
\end{equation}
The difference between the actual acceleration ratio and the theoretical one is that the latter is the ratio of the theoretical upper bounds on the performance of the methods, while the former is the ratio of the real number of iterations performed by the methods on a particular problem instance.

The second one, denoted SDNA, is Method 1 from \cite{qu2016sdna} which uses so-called $\tau$-nice sampling. As was already discussed in Section~\ref{section:related_work}, this method is exactly the same method as RCDVS with the only difference that it uses \emph{uniform} sampling (without replacement) instead of volume sampling. In comparing RCDVS with SDNA, we are interested in seeing how important is the sampling strategy to the performance of the general coordinate descent scheme that we consider in this paper.

\subsection{Quadratic function}

For the first experiment, we have chosen the convex quadratic function from Section~\ref{section:quadratic_function} and set $\tau=2$. Our goal is to observe how the behavior of the methods changes when the spectral gap between the two largest eigenvalues of $A$ increases. For this, we construct the matrix $A$ as follows. First, we choose some $\lambda_1 \geq \lambda_2 := 100$ and set $A := \Diag(\lambda_1, \lambda_2, 1, \dots, 1)$. Then we successively perform 10 random Householder reflections $A \mapsto (I - 2 u u^T) A (I - 2 u u^T)$ on the rows and columns of $A$, where each time the direction $u$ is sampled uniformly from the unit sphere in $\R^n$. Observe that, by construction, the eigenvalues of $A$ are exactly $\lambda_1, \lambda_2, 1, \dots, 1$. Once $A$ is constructed, we set $b := A x^*$, where $x^*$ is generated from the uniform distribution on the hypercube $[-1, 1]^n$ and run each method from $x_0 := 0$ until the objective value becomes $\varepsilon$-close to the optimal one for $\varepsilon := 0.01$. This procedure is repeated 10 times to take into account the randomness in the data.

The results of the experiment\footnote{All experimental results in this paper were obtained on a laptop with the Intel Core i7-8650U CPU (1.90GHz x 8) and 16 GB DDR4 RAM, no parallelism was used. The source code is available at \href{https://github.com/arodomanov/rcdvs}{https://github.com/arodomanov/rcdvs}.} for different values of $n$ and the eigenvalue ratio $\lambda_1/\lambda_2$ are shown in Figure~\ref{figure:experiment_results_quadratic_function}. Each column in this table displays the median value of the corresponding statistic: ``It'' is the total number of iterations (in thousands) taken by the method until its termination; ``T'' is the corresponding total running time (in seconds); ``Acc'' is the actual acceleration ratio of the method over RCD in terms of the number of iterations (this ratio may be less than 1 when there is no acceleration); ``\%'' expresses the ``Acc'' for RCDVS as a percent of the theoretical prediction \eqref{eq:acceleration_ratio_over_RCD}.\footnote{To keep the table concise, we report only the first few significant digits for each statistic. For example, in the first row of the table, the value 2 in the column ``It'' for RCDVS may actually stand for 2.1, 2.53, or even 2.999.} From this table, we can see that the number of iterations for RCD and SDNA grows significantly with the spectral gap between the two largest eigenvalues, while for RCDVS there is almost no growth at all. As a result, RCDVS dramatically outperforms the other two methods both in terms of iterations and total running time, especially for large values of $\lambda_1 / \lambda_2$. By inspecting the ``Acc'' column, we observe that the actual acceleration ratio of RCDVS with respect to RCD monotonically increases with the spectral gap, which is natural. What is more important, the ``\%'' always takes values around 100, which means that our theoretical prediction \eqref{eq:acceleration_ratio_over_RCD} is quite accurate. SDNA, on the contrary, performs even worse than RCD in most cases.

\begin{figure}\scriptsize
\begin{center}
\begin{tabular}{|r|r||r|r||r|r|r||r|r|r|r|} \hline
\multicolumn{2}{|c||}{Parameters} & \multicolumn{2}{c||}{RCD} & \multicolumn{3}{c||}{SDNA} & \multicolumn{4}{c|}{RCDVS} \\ \hline
$n$   & $\lambda_1/\lambda_2$ & It        & T        & It       & Acc    & T      & It     & Acc    & \%         & T  \\ \hline
400   &   4                   &    5      &  0.0     &   15     &   0.3  &  0.1   &    2   &   2    &  118       & 0.05 \\
      &  16                   &   12      &  0.1     &   44     &   0.4  &  0.2   &    2   &   4    &  105       & 0.05 \\
      &  64                   &   37      &  0.2     &   92     &   0.4  &  0.4   &    3   &  11    &   83       & 0.05 \\
      & 256                   &  126      &  0.4     &  125     &   1.2  &  0.5   &    3   &  40    &   77       & 0.05 \\
      & 1,024                 &  499      &  1.3     &  137     &   3.8  &  0.6   &    3   & 132    &   64       & 0.06 \\ \hline

800   &   4                   &    8      &  0.1     &    7     &  1.0  &  0.1    &   4    &   2    &  148       &  0.19 \\
      &  16                   &   18      &  0.2     &   50     &  0.4  &  0.4    &   5    &   3    &  140       &  0.20 \\
      &  64                   &   47      &  0.3     &  156     &  0.3  &  1.1    &   5    &   9    &  115       &  0.20 \\
      & 256                   &  163      &  0.7     &  356     &  0.5  &  2.8    &   6    &  27    &   91       & 0.21 \\
      & 1,024                 &  576      &  2.5     &  526     &  1.1  &  4.3    &   6    &  97    &   84       & 0.21 \\ \hline

1,600 &   4                  &   18      &  0.5     &    22    &  0.8 &   1.0    &   7    &   2    &  189       &  1.03 \\
      &   16                 &   24      &  0.5     &    69    &  0.4 &   3.0    &   9    &   2    &  134       &  1.05 \\
      &   64                 &   65      &  0.9     &   237    &  0.3 &  10.1    &  10    &   6    &  125       &  1.06 \\
      &  256                 &  202      &  2.1     &   794    &  0.3 &  35.0    &  13    &  14    &   87       &  1.17 \\
      & 1,024                &  758      &  6.7     &  2,034   &  0.3 &  86.6    &  14    &  48    &   79       &  1.20 \\ \hline

3,200 &    4                 &   28      &  1.7     &    39    &  0.8 &   3.7    &  15    &   1    &  167       & 4.14 \\
      &   16                 &   41      &  2.1     &   107    &  0.4 &   9.7    &  18    &   2    &  151       & 4.32 \\
      &   64                 &   81      &  3.0     &   428    &  0.2 &  39.4    &  24    &   3    &  116       & 4.75 \\
      &  256                 &  228      &  6.0     &  1,589   &  0.1 & 144.7    &  23    &   9    &  113       & 4.74 \\
      & 1,024                &  828      & 17.1     &  5,026   &  0.2 &  468.0   &   26   &   31   &    97      &  4.91 \\ \hline

\end{tabular}
\end{center}
\caption{\label{figure:experiment_results_quadratic_function}Results for the quadratic function.}
\end{figure}

\subsection{Huber function}

In the second experiment, we still use $\tau=2$ but now consider the Huber function from Example~\ref{example:huber_function}. In contrast to the previous one, this objective is non-strongly convex.

The design of the experiment is almost the same as before. To generate the matrix $A$, we choose $\lambda_1 \geq \lambda_2 := 100$, set $A$ to be the $m \times n$ diagonal matrix with elements $\sqrt{\lambda_1 / \mu}, \sqrt{\lambda_2 / \mu}, \sqrt{1 / \mu}, \dots, \sqrt{1 / \mu}$, where $\mu := 0.01$, and then successively perform 10 random Householder reflections $A \mapsto (I - 2 u u^T) A (I - 2 v v^T)$ on the rows and columns of $A$, where each time $u$ and $v$ are uniformly distributed on the unit spheres in $\R^m$ and $\R^n$ respectively. Such a construction ensures that the matrix $B := \frac{1}{\mu} A^T A$ (see Section~\ref{section:smoothing_technique}) has eigenvalues $\lambda_1, \lambda_2, 1, \dots, 1$ (plus $n - m$ zeros when $m < n$). The vector $b$, the starting point $x_0$ and the termination criterion for the methods are absolutely the same as in the previous experiment.

One additional remark should be made in the case when $m < n$. Recall that in this situation $B$ is in fact degenerate and hence some of its principal submatrices may not be invertible. This does not cause any difficulties for RCD and RCDVS since the probability of choosing a degenerate submatrix in these methods is zero. However, this is not so for SDNA and thus, strictly speaking, SDNA is not well-defined in the case of a degenerate matrix $B$. To fix this problem, we use the More--Penrose pseudoinverse instead of the usual inverse in this method. (We have also tried to simply skip the update when a degenerate submatrix has been chosen, but this strategy turned out to work somewhat worse.)

The results of the experiment are shown in Figure~\ref{figure:experiment_results_dense_huber_function}. In the same way as before, we see that RCDVS always significantly outperforms RCD and its actual acceleration ratio is quite close to the theoretical prediction. However, it is interesting that this time SDNA works quite well for problems with $m < n$ (almost comparably to RCDVS). In particular, its number of iterations is almost independent of the spectral gap. Nevertheless, for $n > m$ its behavior is the same as in the previous experiment.

Now let us consider the same problem but with much bigger dimensions. For this, we slightly change the way we construct $A$. This time, we generate it as a \emph{sparse} matrix using the procedure described above but with sparse directions $u$ and $v$ that are chosen as follows. First, we take some integer $1 \leq p \leq \min\{m, n\}$ that controls the resulting sparsity level of $B$. After this, we pick $p$ random uniformly distributed indices and fill the positions corresponding to these indices with a random vector uniformly distributed on the unit sphere in $\R^p$, the rest of the positions are set to zero. Of course, in order to work with a sparse problem, every method has to be properly modified. In particular, we should use the special algorithm from Section~\ref{section:sparse_two_element_volume_sampling} for doing volume sampling in RCDVS.

The results for the bigger dimensions are shown in Figure~\ref{figure:experiment_results_sparse_huber_function}. Somewhat surprisingly, the problems with $m < n$ are now even more difficult for SDNA than those with $m > n$. Otherwise, the overall picture is the same as previously.

\begin{figure}\scriptsize
\begin{center}
\begin{tabular}{|r|r|r||r|r||r|r|r||r|r|r|r|} \hline
\multicolumn{3}{|c||}{Parameters} & \multicolumn{2}{c||}{RCD} & \multicolumn{3}{c||}{SDNA} & \multicolumn{4}{c|}{RCDVS} \\ \hline
$m$      & $n$      & $\lambda_1/\lambda_2$ & It        & T       & It       & Acc   & T      & It     & Acc    & \%      & T    \\ \hline
   400   &  800     &    4                  &  14       &  0.3    &    7     &   1.9 &  0.2   &   6    &   2    & 131     &  0.3 \\
         &          &   16                  &   34      &  0.5    &    8     &   3.9 &  0.2   &   7    &   4    & 111     &  0.3 \\
         &          &   64                  &  114      &  1.6    &   10     &  11.4 &  0.2   &   8    &   13   &  99     &  0.3 \\
         &          &  256                  &   409     &  6.3    &   12     &  34.3 &  0.3   &   8    &   49   &  94     &  0.3 \\
         &          & 1,024                 & 1,529     & 21.5    &   14     & 118.7 &  0.3   &   8    &  168   &  81     &  0.3 \\ \hline

   800   &  400     &   4                   &  13       &  0.3    &  20      &  0.7  &  0.6   &  5     &    2   & 147     &  0.2 \\
         &          &  16                   &  31       &  0.7    &   57     &   0.6 &   1.4  &   6    &     4  &  107    &  0.2 \\
         &          &   64                  &  103      &  2.0    &  118     &   0.9 &   2.8  &    7   &    13  &   96    &  0.2 \\
         &          & 256                   &  337      &  6.5    &  165     &   2.2 &  4.3   &    7   &    44  &   84    &  0.2 \\
         &          & 1,024                 & 1,402     &  27.3   &  184     &   7.5 &  4.8   &    7   &   172  &   83    &  0.2 \\ \hline

   800   & 1,600    &    4                  &   23      &  0.5    &   23     &  0.9  &  0.9   &   11   &   2    &  143    &  1.0 \\
         &          &   16                  &   45      &  1.0    &   22     &  2.1  &   0.9  &   13   &   3    &  122    &  1.1 \\
         &          &   64                  &  122      &  2.9    &   23     &  5.9  &  1.0   &   14   &   8    &  101    &  1.3 \\
         &          &  256                  &  456      &  9.6    &   23     & 18.8  &  1.0   &   15   &  28    &   98    &  1.2 \\
         &          &  1,024                & 1,646     & 33.8    &   28     & 57.3  &  1.1   &   16   & 104    &   91    &  1.2 \\ \hline

  1,600  &  800     &    4                  &   22      &  0.9    &   25     &  0.8  &   1.7  &   10   &    2   &  155    &  0.8 \\
         &          &  16                   &   44      &  1.6    &   66     &  0.7  &   4.6  &   13   &   3    &  120    &  0.9 \\
         &          &  64                   &  119      &  4.2    &  184     &  0.6  & 11.7   &   14   &   8    &  101    &  1.0 \\
         &          &  256                  &  438      & 15.2    &  406     &  1.0  & 26.4   &   14   &  29    &   98    &  1.0 \\
         &          &  1,024                & 1,588     & 55.1    &  625     &  2.6  & 40.9   &   15   &  98    &   85    &  1.0 \\ \hline
\end{tabular}
\end{center}
\caption{\label{figure:experiment_results_dense_huber_function}Results for the Huber function.}
\end{figure}

\begin{figure}\scriptsize
\begin{center}
\begin{tabular}{|r|r|r||r|r||r|r|r||r|r|r|r|} \hline
\multicolumn{3}{|c||}{Parameters} & \multicolumn{2}{c||}{RCD} & \multicolumn{3}{c||}{SDNA} & \multicolumn{4}{c|}{RCDVS}           \\ \hline
$m$      & $n$      & $\lambda_1/\lambda_2$    & It        & T     & It       & Acc   & T      & It   & Acc    & \%      & T      \\ \hline
8,000    &   16,000 &    64                    &   352     &   3.8 &   2,981  &   0.1 &   36.9 & 125  &     2  &   153   &   1.8 \\
         &          &   256                    &   710     &   7.3 &   6,326  &   0.1 &   77.3 & 151  &     4  &   111   &   2.1 \\
         &          &  1,024                   &  2,328    &  24.5 &  12,250  &   0.2 &  154.0 & 169  &    13  &   101   &   2.3 \\
         &          &  4,096                   &  8,855    &  88.3 &  24,742  &   0.3 &  304.2 & 176  &   49   &    95   &   2.3 \\
         &          &  16,384                  &  35,917   &  363.9&  50,049  &   0.7 &  624.9 & 180  &  200   &    98   &   2.5 \\ \hline

16,000   &    8,000 &    64                    &    332    &   3.6 &   1,351  &   0.2 &   19.3 & 136  &    2   &   134   &   1.9 \\
         &          &   256                    &    727    &   7.7 &   2,654  &   0.3 &   37.9 & 166  &    4   &   107   &   2.2 \\
         &          &  1,024                   &   2,306   &  23.3 &   5,364  &   0.5 &   76.0 & 170  &    13  &   102   &   2.3 \\
         &          &  4,096                   &   8,648   &  87.1 &  10,682  &   0.9 &  154.2 & 175  &    50  &    98   &  2.3 \\
         &          & 16,384                   &  37,623   &  378.3&  21,313  &   1.9 &  303.2 & 181  &   214  &   105   &  2.4 \\ \hline

16,000   &   32,000 &    64                    &   518     &   5.6 &    5,829 &   0.1 &  73.1  & 232  &     2  &    158  &  3.7 \\
         &          &   256                    &   944     &   9.7 &   14,751 &   0.1 &  184.1 & 282  &     3  &    128  &   4.2 \\
         &          &  1,024                   &  2,642    &  27.7 &   32,066 &   0.1 &  410.1 & 352  &    7   &    105  &    5.0 \\
         &          &  4,096                   &  9,539    &  97.2 &   64,144 &   0.1 &  822.1 & 358  &   26   &     98  &   5.3 \\
         &          & 16,384                   &  36,621   &  367.0&  110,638 &   0.3 & 1,402.8& 367  &   98   &    95   &   5.3 \\ \hline

32,000   &  16,000  &    64                   &   501     &    5.7&   2,077  &   0.2 &   28.7 & 228  &    2   &   156   &   3.7 \\
         &          &   256                   &   918     &   10.3&   4,220  &   0.2 &   58.6 & 301  &     3  &   119   &   4.5 \\
         &          &  1,024                  &  2,683    &   27.4&   8,610  &   0.3 &  122.3 & 352  &     7  &   101   &   5.0 \\
         &          &  4,096                  &  9,790    &   99.4&  16,909  &   0.6 &  239.1 & 364  &    27  &   103   &   5.1 \\
         &          &  16,384                  & 38,055    &  384.1&  34,351  &   1.1 &  487.4 & 375  &    102 &    99   &   5.4 \\ \hline
\end{tabular}
\end{center}
\caption{\label{figure:experiment_results_sparse_huber_function}Results for the Huber function (sparse matrix). For dimensions $8,000 \times 16,000$ and $16,000 \times 8,000$ we use $p := 50$, while for $16,000 \times 32,000$ and $32,000 \times 16,000$ we use $p := 70$.}
\end{figure}

\subsection{Logistic regression}

Now we consider the $l^2$-regularized logistic regression function from Example~\ref{example:l2_regularized_logistic_regression}, which is very popular in the context of machine learning. The termination criterion for each method is the same as before with the difference that now the optimal objective value is unknown and we have to calculate it numerically in advance. Nevertheless, this auxiliary computation is needed only for our presentation and does not affect the actual performance of the methods in any way.

We set $\gamma := 1$ since this is a default value of the regularization parameter used in practice. However, instead of generating the data $a_1, \dots, a_m$ and $b_1, \dots, b_m$ artificially as we did before, now we take some real-world data from the LIBSVM website\footnote{\url{https://www.csie.ntu.edu.tw/~cjlin/libsvmtools/datasets/binary.html}}, which is summarized in Figure~\ref{figure:experiment_datasets}. Here $m$ is the number of observations and $n$ is the number of features. The next 4 columns display the four largest eigenvalues of the matrix $B := \frac{1}{4} \sum_{i=1}^m a_i a_i^T + \gamma I$ (see Example~\ref{example:l2_regularized_logistic_regression}), while the last 3 columns show the theoretical acceleration ratio \eqref{eq:acceleration_ratio_over_RCD} for the three corresponding values of $\tau$. The main reason why we are presenting this table is to demonstrate that it is not uncommon for real data to have significant spectral gaps between the first largest eigenvalues (although these gaps are not as big as in our previous experiments with artificial data).

For the results of the experiment, see Figure~\ref{figure:experiment_results_logistic_regression}, where, in contrast to the previous two experiments, we additionally consider several small values of $\tau$ for RCDVS and SDNA. We can see that, on the real data, the method SDNA looks much better than previously and in many cases it outperforms RCD. Nevertheless, RCDVS is still a winner, and its actual acceleration rate is usually even faster than predicted by theory.

\begin{figure}\scriptsize
\begin{center}
\begin{tabular}{|l|r|r||r|r|r|r||r|r|r|} \hline
\multirow{2}{*}{Data} & \multirow{2}{*}{$m$} & \multirow{2}{*}{$n$} & \multicolumn{4}{c||}{\multirow{2}{*}{Top 4 eigenvalues}} & \multicolumn{3}{c|}{TheoryAcc} \\ \cline{8-10}
&                         &        & \multicolumn{4}{c||}{}                       & 2      & 3      & 4 \\ \hline
breast-cancer             & 683    &  10   &  891     &  118    &  41    &  35    & 4.0    & 6.6    & 8.6 \\ \hline
phishing                  & 11,055 &  68   &  1,798   &  109    &  102   &  95    & 2.7    & 3.1    & 3.4 \\ \hline
a9a                       & 32,561 &  123  &  51,184  &  7,502  &  4,745 &  3,695 & 1.8    & 2.1    & 2.3 \\ \hline
\end{tabular}
\end{center}
\caption{\label{figure:experiment_datasets}Real data for logistic regression.}
\end{figure}

\begin{figure}\scriptsize
\begin{center}
\begin{tabular}{|l|r||r|r||r|r|r||r|r|r|r|} \hline
\multicolumn{2}{|c||}{Parameters} & \multicolumn{2}{c||}{RCD} & \multicolumn{3}{c||}{SDNA} & \multicolumn{4}{c|}{RCDVS} \\ \hline
Data            & $\tau$     & It           & T         & It           & Acc   & T       & It         & Acc    & \%      & T      \\ \hline
breast-cancer   & 2          &  1.8         & 0.10      &  0.6         &   3.2 & 0.03    &   0.4      &    4   &  101    & 0.02 \\
                & 3          &              &           &   0.3        &   5.4 & 0.02    &   0.3      &    6   &   96    & 0.02 \\
                & 4          &              &           &   0.2        &   7.2 & 0.02    &   0.1      &   12   &  148    &  0.01 \\ \hline

phishing        & 2          & 11.0         & 7.12      &   4.8        &   2.3 & 3.74    &  3.3       &   3    &   120   &  2.80 \\
                & 3          &              &           &  2.7         &   4.0 & 2.36    &  2.1       &   5    &  169    &  1.69 \\
                & 4          &              &           &  1.8         &   6.3 & 1.93    &  1.6       &   6    &  202    &  2.26 \\ \hline

a9a             & 2          & 290.9        & 949.40    & 444.1        &   0.7 & 1,583   & 175.6      &   1    &   90    & 613.44 \\
                & 3          &              &           & 273.3        &  1.1  & 1,161   & 73.8       &   3    &  189    & 309.02 \\
                & 4          &              &           & 171.8        &  1.7  & 839     &  43.0      &   6    &  296    & 212.13 \\ \hline
\end{tabular}
\end{center}
\caption{\label{figure:experiment_results_logistic_regression}Results for logistic regression.}
\end{figure}

\section{Conclusion}

We have analyzed the randomized coordinate descent method with volume sampling (RCDVS) for minimizing a function, that is smooth with respect to some positive semidefinite matrix $B$. In its iterations, this method randomly selects $\tau$-element subsets of coordinates with probabilities proportional to the determinants of principal submatrices of $B$. We have shown, both theoretically and empirically, that the increase in $\tau$ from $\tau_1$ to $\tau_2$ improves the convergence rate up to the factor~\eqref{eq:theoretical_acceleration_ratio}, which depends on the spectral gap between the $\tau_1$st and $\tau_2$nd eigenvalues of $B$.

However, there are still many important directions for further research:
\begin{itemize}
\item \textbf{Accelerated method.} In addition to the basic randomized coordinate descent, there also exists the \emph{accelerated} one \cite{nesterov2017efficiency,allen2016even}, where the coordinates are sampled with probabilities proportional to the \emph{square roots} of the diagonal elements of $B$. Is it possible to accelerate RCDVS in a similar manner, possibly using the square roots of the determinants as probabilities?

\item \textbf{Constrained and composite optimization.} We have considered only the basic smooth \emph{unconstrained} minimization. However, most optimization methods can often be extended to handle problems involving some \emph{simple constraints} (e.g. box constraints or linear ones), or they can also be extended to working with \emph{composite functions} (when the objective is the sum of a smooth function and a simple convex possibly non-smooth function), while still retaining the original convergence rate. Can we generalize RCDVS to these settings?\footnote{Currently, the main problem here is that, up to our knowledge, there are no corresponding results even for the case $\tau=1$, that is for the RCD method.  }

\item \textbf{Special volume sampling algorithms or different kind of sampling.} Although the results, that we have obtained, are true for \emph{any} value of $\tau$, from the \emph{practical} point of view, currently there is only one choice that is suitable for large scale problems, namely $\tau=2$ (apart from previously known $\tau=1$). The problem is that currently there are no algorithms for volume sampling whose preprocessing/sampling complexity is appropriate for large scale applications (e.g. $O(n^2)$ instead of $O(n^3)$). However, this does not mean that it is not possible to devise such algorithms, especially when the matrix possesses special \emph{structure} (e.g. sparse, banded, low-rank etc.). Another interesting question is whether volume sampling can be replaced with some other kind of sampling which is more practical but still gives similar results.
\end{itemize}

\section*{Acknowledgments}
The authors are thankful to the anonymous reviewers for
their attentive reading and valuable comments.

\pagebreak

\bibliographystyle{unsrt}
\bibliography{references}

\begin{thebibliography}{10}

\bibitem{richtarik2016parallel}
P.~Richt{\'a}rik and M.~Tak{\'a}c.
\newblock Parallel coordinate descent methods for big data optimization.
\newblock {\em Mathematical Programming}, 156(1-2):433--484, 2016.

\bibitem{necoara2017random}
I.~Necoara, Y.~Nesterov, and F.~Glineur.
\newblock Random block coordinate descent methods for linearly constrained
  optimization over networks.
\newblock {\em Journal of Optimization Theory and Applications},
  173(1):227--254, 2017.

\bibitem{fercoq2016optimization}
O.~Fercoq and P.~Richt{\'a}rik.
\newblock Optimization in high dimensions via accelerated, parallel, and
  proximal coordinate descent.
\newblock {\em SIAM Review}, 58(4):739--771, 2016.

\bibitem{necoara2016parallel}
I.~Necoara and D.~Clipici.
\newblock Parallel random coordinate descent method for composite minimization:
  Convergence analysis and error bounds.
\newblock {\em SIAM Journal on Optimization}, 26(1):197--226, 2016.

\bibitem{bradley2011parallel}
J.~Bradley, A.~Kyrola, D.~Bickson, and C.~Guestrin.
\newblock Parallel coordinate descent for l1-regularized loss minimization.
\newblock In {\em Proceedings of the 28th International Conference on
  International Conference on Machine Learning}, ICML'11, pages 321--328, USA,
  2011. Omnipress.

\bibitem{peng2013parallel}
Z.~Peng, M.~Yan, and W.~Yin.
\newblock Parallel and distributed sparse optimization.
\newblock In {\em 2013 Asilomar conference on signals, systems and computers},
  pages 659--646. IEEE, 2013.

\bibitem{nesterov2012efficiency}
Y.~Nesterov.
\newblock Efficiency of coordinate descent methods on huge-scale optimization
  problems.
\newblock {\em SIAM Journal on Optimization}, 22(2):341--362, 2012.

\bibitem{beck2013convergence}
A.~Beck and L.~Tetruashviferli.
\newblock On the convergence of block coordinate descent type methods.
\newblock {\em SIAM journal on Optimization}, 23(4):2037--2060, 2013.

\bibitem{nutini2015coordinate}
J.~Nutini, M.~Schmidt, I.~Laradji, M.~Friedlander, and H.~Koepke.
\newblock Coordinate descent converges faster with the {G}auss-{S}outhwell rule
  than random selection.
\newblock In {\em International Conference on Machine Learning}, pages
  1632--1641, 2015.

\bibitem{nesterov2017efficiency}
Y.~Nesterov and S.~Stich.
\newblock Efficiency of the accelerated coordinate descent method on structured
  optimization problems.
\newblock {\em SIAM Journal on Optimization}, 27(1):110--123, 2017.

\bibitem{allen2016even}
Z.~Allen-Zhu, Z.~Qu, P.~Richt{\'a}rik, and Y.~Yuan.
\newblock Even faster accelerated coordinate descent using non-uniform
  sampling.
\newblock In {\em International Conference on Machine Learning}, pages
  1110--1119, 2016.

\bibitem{richtarik2014iteration}
P.~Richt{\'a}rik and M.~Tak{\'a}c.
\newblock Iteration complexity of randomized block-coordinate descent methods
  for minimizing a composite function.
\newblock {\em Mathematical Programming}, 144(1-2):1--38, 2014.

\bibitem{shalev2013stochastic}
S.~Shalev-Shwartz and T.~Zhang.
\newblock Stochastic dual coordinate ascent methods for regularized loss
  minimization.
\newblock {\em Journal of Machine Learning Research}, 14(Feb):567--599, 2013.

\bibitem{lin2015accelerated}
Q.~Lin, Z.~Lu, and L.~Xiao.
\newblock An accelerated randomized proximal coordinate gradient method and its
  application to regularized empirical risk minimization.
\newblock {\em SIAM Journal on Optimization}, 25(4):2244--2273, 2015.

\bibitem{shalev2016sdca}
S.~Shalev-Shwartz.
\newblock {SDCA} without duality, regularization, and individual convexity.
\newblock In {\em International Conference on Machine Learning}, pages
  747--754, 2016.

\bibitem{qu2016sdna}
Z.~Qu, P.~Richt{\'a}rik, M.~Tak{\'a}c, and O.~Fercoq.
\newblock {SDNA}: stochastic dual {N}ewton ascent for empirical risk
  minimization.
\newblock In {\em International Conference on Machine Learning}, pages
  1823--1832, 2016.

\bibitem{wright2015coordinate}
S.~Wright.
\newblock Coordinate descent algorithms.
\newblock {\em Mathematical Programming}, 151(1):3--34, 2015.

\bibitem{kovalev2018stochastic}
D.~Kovalev, P.~Richt{\'a}rik, E.~Gorbunov, and E.~Gasanov.
\newblock Stochastic spectral and conjugate descent methods.
\newblock In {\em Advances in Neural Information Processing Systems}, pages
  3362--3371, 2018.

\bibitem{deshpande2006matrix}
A.~Deshpande, L.~Rademacher, S.~Vempala, and G.~Wang.
\newblock Matrix approximation and projective clustering via volume sampling.
\newblock In {\em Proceedings of the seventeenth annual ACM-SIAM symposium on
  Discrete algorithms}, pages 1117--1126. Society for Industrial and Applied
  Mathematics, 2006.

\bibitem{deshpande2010efficient}
A.~Deshpande and L.~Rademacher.
\newblock Efficient volume sampling for row/column subset selection.
\newblock In {\em Foundations of Computer Science (FOCS), 2010 51st Annual IEEE
  Symposium on}, pages 329--338. IEEE, 2010.

\bibitem{li2017polynomial}
C.~Li, S.~Jegelka, and S.~Sra.
\newblock Polynomial time algorithms for dual volume sampling.
\newblock In {\em Advances in Neural Information Processing Systems}, pages
  5038--5047, 2017.

\bibitem{derezinski2018reverse}
M.~Derezinski and M.~Warmuth.
\newblock Reverse iterative volume sampling for linear regression.
\newblock {\em The Journal of Machine Learning Research}, 19(1):853--891, 2018.

\bibitem{derezinski2018leveraged}
M.~Derezinski, M.~Warmuth, and D.~Hsu.
\newblock Leveraged volume sampling for linear regression.
\newblock In {\em Advances in Neural Information Processing Systems}, pages
  2510--2519, 2018.

\bibitem{meyer2000matrix}
C.~Meyer.
\newblock {\em Matrix analysis and applied linear algebra}, volume~71.
\newblock SIAM, 2000.

\bibitem{karimi2016linear}
H.~Karimi, J.~Nutini, and M.~Schmidt.
\newblock Linear convergence of gradient and proximal-gradient methods under
  the {P}olyak-{\l}ojasiewicz condition.
\newblock In {\em Joint European Conference on Machine Learning and Knowledge
  Discovery in Databases}, pages 795--811. Springer, 2016.

\bibitem{nesterov2005smooth}
Y.~Nesterov.
\newblock Smooth minimization of non-smooth functions.
\newblock {\em Mathematical Programming}, 103(1):127--152, 2005.

\bibitem{beck2012smoothing}
A.~Beck and M.~Teboulle.
\newblock Smoothing and first order methods: A unified framework.
\newblock {\em SIAM Journal on Optimization}, 22(2):557--580, 2012.

\end{thebibliography}

\appendix

\section{Proof of Theorem~\ref{theorem:convergence_for_convex_functions}}
\label{sec-proof-conv}

\begin{proof}
Note that $\Argmin f$ is non-empty and compact by the Weierstrass theorem ($f$ is continuous as a convex function with open domain, the sublevel set $L_f(x_0)$ is bounded by the statement and is closed as the inverse image of a closed set under a continuous mapping). Hence, both $\min$ and $\max$ in the definition of $D_\tau$ are attained and, in particular, $D_\tau$ is finite.

Using Lemma~\ref{lemma:sufficient_decrease_lemma_2}, we obtain $x_k \in L_f(x_0)$ and
\begin{equation}\label{eq:convergence_for_convex_functions:proof:1}
\E f(x_k) - \E f(x_{k+1}) \geq \frac{1}{2} \E \| \nabla f(x_k) \|^2_{(B_{\tau})^{-1}}
\end{equation}
for all $k \geq 0$.

Let $k \geq 0$. By the convexity of $f$ and Cauchy-Schwarz inequality, we have
$$
f(x_k) - \min f \leq \langle \nabla f(x_k), x_k - x^* \rangle \leq \| \nabla f(x_k) \|_{(B_\tau)^{-1}} \| x_k - x^* \|_{B_\tau}
$$
Hence, by the definition of $D_{\tau}$, it follows that
$$
f(x_k) - \min f \leq \| \nabla f(x_k) \|_{(B_\tau)^{-1}} D_{\tau},
$$
from which, by Jensen's inequality, we conclude that
\begin{equation}\label{eq:convergence_for_convex_functions:proof:2}
\E \| \nabla f(x_k) \|^2_{(B_{\tau})^{-1}} \geq \frac{ \E (f(x_k) - \min f)^2 }{D_\tau^2} \geq \frac{ (\E f(x_k) - \min f)^2 }{D_\tau^2}.
\end{equation}

Combining \eqref{eq:convergence_for_convex_functions:proof:1} and \eqref{eq:convergence_for_convex_functions:proof:2} and writing $\delta_k := \E f(x_k) - \min f$, we finally obtain
\begin{equation}\label{eq:convergence_for_convex_functions:proof:3}
\delta_k - \delta_{k+1} \geq \frac{\delta_k^2}{2 D_\tau^2}
\end{equation}
for all $k \geq 0$. Now the claim follows by a standard argument. Indeed, we can assume without loss of generality that $\delta_k$ is strictly positive for each $k \geq 0$. Then, using \eqref{eq:convergence_for_convex_functions:proof:3} together with the monotonicity of $\delta_k$, we obtain
$$
\frac{1}{\delta_{k+1}} - \frac{1}{\delta_k} = \frac{\delta_k - \delta_{k+1}}{\delta_k \delta_{k+1}} \geq \frac{1}{2 D_\tau^2}
$$
for all $k \geq 0$. By induction, it follows that
$$
\frac{1}{\delta_k} \geq \frac{1}{\delta_0} + \frac{k}{2 D_\tau^2} \geq \frac{k + 1}{2 D_\tau^2}
$$
for all $k \geq 0$, where the last inequality is a consequence of \eqref{eq:convergence_for_convex_functions:proof:3} and the positivity of $\delta_1$.
\end{proof}

\section{Proof of Theorem~\ref{theorem:convergence_for_strongly_convex_functions}}
\label{sec-proof-sconv}

\begin{proof}
Let $0 \leq k \leq K$. By the same argument as in the proof of Theorem~\ref{theorem:convergence_for_convex_functions},
\begin{equation}\label{st-conv-1}
\E f(x_k) - \E f(x_{k+1}) \geq \frac{1}{2} \E \| \nabla f(x_k) \|_{(B_\tau)^{-1}}^2.
\end{equation}
At the same time, by strong convexity of $f$ in the norm $\| \cdot \|_{B_\tau}$, we have\footnote{This is a standard inequality for strongly convex functions, and can be easily proved from the definition~\eqref{eq:strong_convexity} by minimizing both sides in $y \in \R^n$.}
\begin{equation}\label{st-conv-2}
\frac{1}{2 \mu_\tau} \| \nabla f(x_k) \|_{(B_\tau)^{-1}}^2 \geq f(x_k) - \min f.
\end{equation}
Combining \eqref{st-conv-1} and \eqref{st-conv-2}, we obtain that
$$
\E f(x_k) - \E f(x_{k+1}) \geq \mu_\tau (\E f(x_k) - \min f).
$$
or, equivalently, after rearranging, that
$$
\E f(x_{k+1}) \leq \E f(x_k) - \mu_\tau (\E f(x_k) - \min f).
$$
Thus, subtracting $\min f$ from both sides, we get
$$
\E f(x_{k+1}) - \min f \leq (1 - \mu_\tau) (\E f(x_k) - \min f).
$$
The claim now follows by induction.
\end{proof}

\end{document}